\def\ve{\varepsilon}
\def\mod{\,\text{\rm mod}\;}
\def\beq{\begin{equation}}
\def\eeq{\end{equation}}
\def\cond{\text{\rm cond}\,}
\def\mfS{{\mathfrak S}}
\def\mcR{{\mathcal R}}
\newtheorem{Thm}{Theorem}
\newtheorem{Lem}{Lemma}
\newtheorem{Cor}{Corollary}
\newtheorem*{Corol}{Corollary}
\begin{document}

\title{A new explicit formula in the additive theory of primes with applications II.
The exceptional set in Goldbach's problem}

\author{by\\
J\'anos Pintz\thanks{Supported by ERC-AdG.~321104 and National Research Development and Innovation Office, NKFIH, K~119528.}}

\date{Dedicated to the 60\textsuperscript{th} birthday of Sz. Gy. R\'ev\'esz}

\numberwithin{equation}{section}

\maketitle

\section{Introduction}
\label{sec:1}

The first non-trivial, although conditional, estimate for the exceptional set $(\mathcal P$ denotes the set of primes)
\begin{equation}
\mathcal E = \{2\mid n;\  n \neq p + p',\ p, p' \in \mathcal P\}
\label{eq:1.1}
\end{equation}
in Goldbach's problem was achieved by Hardy and Littlewood \cite{HL} in 1924.
They showed under the Generalized Riemann Hypothesis (GRH) the estimate
\beq
E(X) = \{n \leq X; \ n \in \mathcal E\} \ll_\ve X^{1/2 + \ve}
\label{eq:1.2}
\eeq
for any $\ve > 0$. This result is apart from the substitution of $X^\ve$ by $\log^c X$ by Goldston \cite{Go} even today the best conditional result on~GRH.

The basic result that almost all even integers are Goldbach numbers, that is, can be represented as the sum of two primes, was proved by the aid of Vinogradov's method \cite{Vin} in 1937/38 simultaneously and independently by Van der Corput \cite{VdC}, \v Cudakov \cite{Cud} and Estermann \cite{Est}.
They showed
\beq
E(X) \ll_A \frac{X}{(\log X)^A} \quad \text{ for any }\ A > 0.
\label{eq:1.2masodik}
\eeq
It is easy to see that \eqref{eq:1.2masodik}, even any estimate of type
\beq
E(2N) < \pi(2N) - 1 \quad \text{ for } \ N > N
\label{eq:1.3}
\eeq
implies Vinogradov's three primes theorem \cite{Vin} that every sufficiently large odd integer can be written as the sum of three primes.

The result \eqref{eq:1.2masodik} held the record for 35 years, when Vaughan \cite{Vau} improved it in 1972 to
\beq
E(X) \ll X \exp \bigl( - c \sqrt{\log X}\bigr).
\label{eq:1.4}
\eeq
The breakthrough came just 3 years later when Montgomery and Vaughan \cite{MV} showed that the estimate
\beq
E(X) \ll_\delta X^{1 - \delta}
\label{eq:1.5}
\eeq
holds with an unspecified but explicitly calculable value $\delta > 0$.

The problem, to show \eqref{eq:1.5} with a not too small explicit value of $\delta$ turned out to be very difficult.
It was shown in 1989 by J. R. Chen and M. Liu \cite{CL} that \eqref{eq:1.5} holds with $\delta = 0.05$, ten years later by H. Z. Li \cite{Li1} that also $\delta = 0.079$ is admissible.
This was improved by him \cite{Li2} to
\beq
E(X) < X^{0.914}
\label{eq:1.6}
\eeq
for any $X > X_0$, ineffective constant.
Finally in 2010 Lu \cite{Lu} succeeded to show
\beq
\label{eq:uj1.8}
E(X) < X^{0{.}879}
\eeq
for $X > X_2$ ineffective constant.

The present work will be devoted to the proof of the following result.

\begin{Thm}
\label{th:1}
There is an ineffective constant $X_2$ such that for $X > X_2$
\beq
E(X) < X^{0.72}.
\label{eq:1.8}
\eeq
\end{Thm}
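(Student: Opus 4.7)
The plan is to carry out the circle method for Goldbach's problem in the Montgomery--Vaughan \cite{MV} framework, but with the major arcs extended as much as possible by exploiting the new explicit formula established in Part~I of the present series. Write $r(N) = \sum_{p+p' = N}(\log p)(\log p')$ and $S(\alpha) = \sum_{p \le X}(\log p) e(p\alpha)$, so that $r(N) = \int_0^1 S(\alpha)^2 e(-N\alpha)\,d\alpha$, and dissect the unit interval via Farey fractions at level $Q = X^\theta$ into major arcs $\mathfrak M$ and minor arcs $\mathfrak m$. On $\mathfrak M$, the new explicit formula gives, for $\alpha = a/q + \beta$ with $(a,q) = 1$, an expansion $S(a/q + \beta) = (\mu(q)/\varphi(q))T(\beta) + E(a/q + \beta)$, where $T(\beta) = \sum_{n \le X} e(n\beta)$ and the remainder $E$ is an explicit sum over the non-trivial zeros of the Dirichlet $L$-functions $L(s,\chi)$ for $\chi$ modulo $q$, sharper and valid on a longer range of $\beta$ than the classical identity.

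Squaring this expansion and integrating produces the expected main term $\mfS(N)J(N)$, with $J(N) \sim N$, together with a bilinear contribution $\mcR(N)$ in the zero-sum $E$; on $\mathfrak m$ the main term is negligible and one retains the whole of $S(\alpha)^2$. Parseval's identity then yields
\begin{equation*}
\sum_{N \le X} |\mcR(N)|^2 \ll \int_{\mathfrak M} |S(\alpha)^2 - M(\alpha)|^2\,d\alpha + \int_{\mathfrak m} |S(\alpha)|^4\,d\alpha,
\end{equation*}
where $M(\alpha) = (\mu(q)/\varphi(q))^2 T(\beta)^2$ for $\alpha = a/q + \beta$ is the standard major-arc main term. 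The first integral, after expansion, reduces to a weighted double zero-sum of the form $\sum_{q \le Q}\sum_{\chi \bmod q}\sum_{\rho_1,\rho_2} X^{\beta_{\rho_1}+\beta_{\rho_2}}/|\rho_1 \rho_2|$, which I would bound using strong zero-density estimates $N(\sigma,T,\chi) \ll (qT)^{A(\sigma)(1-\sigma)}(\log qT)^c$ in the range $\sigma$ close to $1$. The minor-arc integral is controlled by combining Hua's bound $\int_0^1 |S|^4 \ll X^2 \log^c X$ with a Vinogradov-type estimate $\sup_{\mathfrak m}|S| \ll X/(\log X)^A$.

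Chebyshev's inequality then gives $E(X) \ll X^{-2}\sum_{N\le X}|\mcR(N)|^2$, since any exceptional $N$ must satisfy $|\mcR(N)| \gg \mfS(N)N \gg N$. Substituting the zero-density bound together with the minor-arc estimate and optimising the parameter $\theta$ yields the desired $E(X) \ll X^{0{.}72}$. The improvement over the $0{.}879$ exponent of \cite{Lu} comes from the fact that the new explicit formula permits a substantially larger value of $\theta$, which both shrinks the minor arcs and leaves extra room in the zero-density input. The possibly exceptional Landau--Siegel zero is treated in the usual way via Siegel's theorem, and is the source of the ineffectivity of the constant $X_2$.

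The main obstacle is the precise tuning between the Farey-dissection exponent $\theta$ and the zero-density exponent $A(\sigma)$. Too large a $\theta$ makes the double zero-sum over moduli $q \le Q$ and characters $\chi \bmod q$ too large to absorb; too small a $\theta$ leaves the $L^4$-moment on $\mathfrak m$ too big to reach the target $0{.}72$. The technical heart of the proof is therefore to transmit the gain of the sharper explicit formula through the bilinear Parseval step without losing it to the double zero contribution, which requires the new formula to be coupled with a carefully chosen density estimate of Huxley--Jutila--Heath-Brown type at exactly the exponent needed to balance the two sides of the optimisation.
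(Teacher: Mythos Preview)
Your proposal follows the classical Montgomery--Vaughan $L^2$ framework: bound $\sum_N |\mcR(N)|^2$ via Parseval, control the zero contribution by density theorems summed over all moduli $q \le Q$ and all characters, and apply Chebyshev. This is essentially the route of Chen--Liu, Li, and Lu, culminating in Lu's exponent $0.879$. The paper's approach is fundamentally different, and the $L^2$ averaging you propose would discard exactly the structural information needed to reach~$0.72$.

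The paper works \emph{pointwise}. The explicit formula (Theorem~A) expresses $R_1(m)$, for each even $m$, as a double sum over a \emph{bounded} set of generalized exceptional zeros $(\varrho_i,\chi_i)$, with coefficients $\mfS(\chi_i,\chi_j,m)$. The decisive property is~(2.13): $|\mfS(\chi_i,\chi_j,m)|$ is negligible unless $r_i\mid C(\eta)m$, $r_j\mid C(\eta)m$, and $\cond(\chi_i\overline{\chi_j})$ is bounded by an absolute constant. One then sorts the even $m\in[X/2,X]$ into $O(1)$ classes according to which conductors divide $C(\eta)m$; classes whose least common multiple exceeds $X^\vartheta$ contribute $\ll X^{1-\vartheta}$ to the exceptional set directly, and in each remaining class all relevant characters have conductor dividing a \emph{single} modulus $q\le X^\vartheta$. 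The problem becomes: with $A=1/\vartheta=25/7$, show
\[
S_0=\sum\nolimits' e^{-A(\lambda_i+\lambda_j)}<1-2\ve,
\]
where the sum is over low-lying zeros modulo a fixed $q$, subject to the quasi-diagonal constraint $\cond(\chi_i\overline{\chi_j})=O(1)$. This is a two-dimensional analogue of Linnik's least-prime problem, and Sections~4--9 develop new log-free density theorems (Theorems~C, D, H--L) and adapt Heath-Brown's Linnik-constant machinery to settle it by a delicate numerical case analysis.

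Your unrestricted double zero-sum $\sum_{q\le Q}\sum_\chi\sum_{\rho_1,\rho_2}X^{\beta_1+\beta_2}$ over \emph{all} moduli and characters is precisely what the earlier works had to confront, and it is too large to yield $0.72$ with any available density input. The improvement here does not come from enlarging the major arcs (the paper takes $\vartheta\approx 0.28$, not larger than before), nor from a sharper Huxley--Jutila density exponent, but from the reduction to a single modulus together with the bounded-conductor constraint on pairs $(\chi_i,\chi_j)$. Both of these are invisible after $L^2$ averaging over~$m$, so your Parseval step is the point at which the argument would fail to reach the target.
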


The seemingly moderate size of the present work is still misleading concerning the difficulties of the proof of Theorem~\ref{th:1}.
Namely, a crucial role will be played in the proof by the results of part~I of this series (\cite{Pin2}) which is again heavily based on the results of another preparatory work (\cite{Pin}).
Finally we mention that apart from the relatively short final Section~9 all results of the present work (in many cases in a refined form) will be used in later parts of this series
to achieve further improvements over \eqref{eq:1.8}.

\section{Notation. The role of the explicit formula}
\label{sec:2}

The explicit formula proved in part I (\cite{Pin2}) will play a central role in the proof of Theorem~\ref{th:1}; in fact, it serves as the basis for any further examination.
In order to formulate the explicit formula we first need to introduce the notation.

Let $\ve$ and $\ve_0$ be small positive numbers, $X$ be a number large enough $(X > X_0(\ve, \ve_0))$, and let us define
\beq
X_1 := X^{1 - \ve_0}, \quad e(u) := e^{2\pi iu}, \quad S(\alpha) := \sum_{X_1 < p \leq X} \log pe(p\alpha),
\label{eq:2.1}
\eeq
where $p$, $p'$, $p_i$ will always denote primes.
$|\mathcal M|$ will denote the cardinality of the finite set $\mathcal M$.
We will define the major ($\mathfrak M$) and minor ($\mathfrak m$) arcs through the parameters $P$ and $Q$ satisfying
\beq
(\log X)^C \leq P \leq X^{4/9 - \ve}, \quad Q = \frac{X}{P},
\label{eq:2.2}
\eeq
\beq
\mathfrak M = \bigcup_{q \leq P} \bigcup_{\substack{a\\(a, q) = 1}} \left[ \frac{a}{q} - \frac1{qQ}, \frac{a}{q} + \frac1{qQ} \right], \quad \mathfrak m = \left[ \frac1{Q}, 1 + \frac1{Q}\right] \setminus \mathfrak M.
\label{eq:2.3}
\eeq

We will examine the number of Goldbach decompositions of even numbers $m \in [X / 2, X]$ in the form
\beq
R  (m) = \sum_{\substack{p + p' = m\\ p, p' \geq X_1}} \log p \cdot \log p' = R  _1(m) + R  _2(m),
\label{eq:2.4}
\eeq
where
\beq
R  _1(m) = \int\limits_{\mathfrak M} S^2(\alpha) e(-m \alpha) d\alpha, \qquad
R  _2(m) = \int\limits_{\mathfrak m} S^2(\alpha) e(-m\alpha) d\alpha.
\label{eq:2.5}
\eeq

The now standard treatment of the minor arcs (Parseval's theorem and the estimate of Vinogradov, reproved in a simpler way by Vaughan) gives
\beq
|R  _2(m)| \leq \frac{X}{\sqrt{\log X}} \quad \text{ for } \ P \leq X^{2/5}
\label{eq:2.6}
\eeq
apart from at most $C\frac{X}{P} \log^{10} X$ exceptional values~$m$ (see Section~5 of \cite{Pin2}, for example).

In order to formulate the explicit formula for the major arcs in Goldbach's problem we will define the set $\mathcal E = \mathcal E(H, P, T, X)$ of generalized exceptional singularities of the functions $L'/L$ for all primitive $L$-functions $\mod r$, $r \leq P$, as follows ($\chi_0 = \chi_0(\mod 1)$ is considered as a primitive character $\mod 1$)
\begin{align}
(\varrho_0, \chi_0) &\in \mathcal E \quad \text{ if } \ \varrho_0 = 1,
\label{eq:2.7}\\
(\varrho_i, \chi_i) &\in \mathcal E \quad \text{ if } \ \exists \chi_i,\ \mathrm{cond}\, \chi_i = r_i \leq P, \ L(\varrho_i, \chi_i) = 0,\nonumber\\
\beta_i &\geq 1 - \frac{H}{\log X}, \quad |\gamma_i| \leq T, \nonumber
\end{align}
where zeros of $L$-functions are denoted by $\varrho = \beta + i \gamma = 1 - \delta + i \gamma$ and $\cond \chi$ denotes the conductor of~$\chi$.
Let further
\begin{align}
A(\varrho) &= 1\phantom{-} \quad \text{ if } \ \varrho = 1,
\label{eq:2.8}\\
A(\varrho) &= -1 \quad \text{ if } \ \varrho \neq 1.
\nonumber
\end{align}

The expected main term of $R  _1(m)$ is the well-known singular series of Hardy and Littlewood, arising from the effect of the pole of $\zeta(s)$ at $s = 1$:
\beq
\mfS  (m) := \mfS (\chi_0, \chi_0, m) := \prod_{p\mid m} \left(1 + \frac1{p - 1}\right) \prod_{p \nmid m} \left( 1 - \frac1{(p - 1)^2}\right).
\label{eq:2.9}
\eeq
However, if we have zeros near to $s = 1$ then we necessarily have a number of secondary terms with coefficients $\mfS (\chi_i, \chi_j, m)$ corresponding to the primitive characters belonging to generalized exceptional zeros.
We will call these characters generalized exceptional characters, the corresponding singular series $\mfS (\chi_i, \chi_j, m)$ generalized exceptional singular series.
They can be expressed in a very complicated explicit form, proven in the Main Lemma of part~I (\cite{Pin2}).
However, the important properties of it can be incorporated into the following theorem, where we use the notation and conditions of the present section.

\medskip
\noindent
{\bf Theorem A (Explicit formula).} {\it Let $0 < \ve < \ve_0$, $\ve < \vartheta < \frac49 - \ve$ be any numbers, $2\mid m \in \left[ \frac{X}{2}, X\right]$.
Then there exists $P \in (X^{\vartheta - \ve}, X^\vartheta)$ such that for $X > X_0(\ve)$
\begin{align}
R  _1(m) &= \sum_{\varrho_i \in \mathcal E} \sum_{\varrho_j \in \mathcal E} A(\varrho_i) A(\varrho_j) \mfS (\chi_i, \chi_j, m) \frac{\Gamma(\varrho_i) \Gamma(\varrho_j)}{\Gamma(\varrho_i + \varrho_j)} m^{\varrho_i + \varrho_j - 1}
\label{eq:2.10}\\
&\quad + O \left( X e^{-cH} + \frac{X}{\sqrt{T}} + X^{1 - \ve}\right),
\nonumber
\end{align}
where the generalized singular series satisfy
\beq
|\mfS (\chi_i, \chi_j, m)| \leq \mfS (\chi_0, \chi_0, m) = \mfS (m);
\label{eq:2.11}
\eeq
further for any $\eta$ small enough
\beq
|\mfS (\chi_i, \chi_j, m)| \leq \eta ,
\label{eq:2.12}
\eeq
unless the following three conditions all hold,
\beq
r_i |C(\eta)m,\ r_j| C(\eta) m, \ \ \cond \chi_i \chi_j < \eta^{-3}
\label{eq:2.13}
\eeq
where $C(\eta)$ is a suitable constant depending only on~$\eta$.}

Its proof follows from Theorem~1 and Main Lemma~1 in part~I \cite{Pin2}.

\medskip
\noindent
{\bf Remark.}
A very important feature of the explicit formula is that the number $K$ of generalized exceptional zeros appearing in \eqref{eq:2.10} is by log-free zero density theorems (cf.\ Jutila \cite{Jut}) bounded from above by
\beq
K \leq Ce^{2H},
\label{eq:2.14}
\eeq
so it is bounded by an absolute constant (depending on $\ve$), if we choose $H$ as a sufficiently large absolute constant depending on $\ve$, which we suppose later on in the proof of Theorem~\ref{th:1}.
Similarly, we will choose $T$ as a sufficiently large constant depending on~$\ve$.

\medskip
Although the quoted explicit formula is in general a good starting point for the proof of
\beq
R  _1(m) > \ve \mfS (m) m
\label{eq:2.15}
\eeq
if $\vartheta$ is small enough, the argument breaks down in case of the existence of a Siegel-zero, $1 - \delta$ corresponding to $L(s, \chi_1)$, in which case we might have $\mfS (\chi_1, \chi_1, m) = -\mfS (m)$ and we cannot
show the crucial relation \eqref{eq:2.15} if $\delta$ is small enough.
In this case the Deuring--Heilbronn phenomenon can help.
This case was worked out as Theorem~2 in part I \cite{Pin2} which we quote now as

\medskip
\noindent
{\bf Theorem B.} {\it Let $\ve' > 0$ be arbitrary.
If $X > X(\ve')$, ineffective constant and there exists a Siegel zero $\beta_1$ of $L(s, \chi_1)$ with
\beq
\beta_1 > 1 - h/\log X, \ \ \cond \chi_1 \leq X^{\frac49 - \ve'},
\label{eq:2.16}
\eeq
where $h$ is a sufficiently small constant depending on $\ve'$, then}
\beq
E(X) < X^{\frac35 + \ve'}.
\label{eq:2.17}
\eeq

\medskip
\noindent
{\bf Remark.}
We will choose $\ve' = 10^{-3}$ here.
Then in the proof of Theorem~1 we are entitled to suppose that all $L(s, \chi)$ functions $\mod r \leq P$ satisfy
\beq
L(s, \chi) \neq 0 \quad \text{ for }\ s \in [1 - c_0 /\log X,\ 1]
\label{eq:2.18}
\eeq
if we choose $\vartheta \leq 0.44$.
 In other words, we do not need to worry about exceptional zeros $1 - \delta$ satisfying $\delta < c_0/\log X$ with a small but fixed~$c_0 > 0$.

\medskip
The well-known relation (cf.\ \cite{Kar}, p.~46) $(\mathrm{Re}\, w, \mathrm{Re}\, z > 0)$
\beq
\frac{\Gamma(w) \Gamma(z)}{\Gamma(w + z)} = B(w, z) = \int\limits^1_0 x^{w - 1} (1 - x)^{z - 1} dx
\label{eq:2.19}
\eeq
tells us that
\beq
|B(\varrho_i, \varrho_j)| \leq |B(\mathrm{Re}\, \varrho_i, \mathrm{Re}\, \varrho_j)| =
 B(1,1) + O(1 / \log X) = 1 + O(1/\log X).
\label{eq:2.20}
\eeq

Hence, taking into account the relations \eqref{eq:2.11}--\eqref{eq:2.13} we see that the estimation \eqref{eq:2.15} will follow, if we can show
\beq
\underset{\substack{\varrho_i, \varrho_j \in \mathcal E\\
(\varrho_i, \varrho_j) \neq (1,1)}}{\sum\nolimits^*} X^{-\delta_i - \delta_j} < 1 - 2\ve ,
\label{eq:2.21}
\eeq
where the $*$ means that the additional condition \eqref{eq:2.13} is satisfied for the pairs $(\varrho_i, \varrho_j)$ of zeros in the summation.

The expression \eqref{eq:2.21} can be estimated directly by density theorems and the Deuring--Heilbronn phenomenon, as done in the earlier estimates of Chen-Liu \cite{CL} and Hongze Li \cite{Li1}, \cite{Li2}, and Lu \cite{Lu}.
It also resembles the well-studied problem of the Linnik-constant, with the seemingly major disadvantage that
\[
\text{the zeros do not belong to a fixed modulus } q \leq P
\tag{\hbox{$\dagger$}}
\]
but to a set of different moduli $r_i \leq P$.

In what follows below, we will show that this disadvantage can be overwhelmed thanks to the information \eqref{eq:2.13} supplied by the explicit formula.

We will choose $P_0 = X^{\vartheta +2\ve}$, so our $P$ will satisfy
\beq
P \in \Bigl[ X^{\vartheta + \ve}, X^{\vartheta + 2\ve}\Bigr].
\label{eq:2.22}
\eeq
Thus the exceptional set arising from the minor arcs \eqref{eq:2.6} will be $o(X^{1 - \vartheta})$.
Then we consider the set $\mcR  $ of the $K$ generalized exceptional zeros appearing in \eqref{eq:2.10} whose number $K$ is bounded by an absolute constant depending on~$\ve$,
\beq
0 \leq K \leq K(\ve) - 1
\label{eq:2.23}
\eeq
according to \eqref{eq:2.14} since we will choose $H$ as a big constant depending on~$\ve$.
(If $K = 0$ we are ready.)

Let us choose now
\beq
\eta = \frac{\ve}{K^2(\ve)},
\label{eq:2.24}
\eeq
and write
\beq
C(\eta) = C_1(\ve).
\label{eq:2.25}
\eeq
In this case the total contribution of terms not satisfying \eqref{eq:2.13} will be really less than $\ve X$ in \eqref{eq:2.10}, so \eqref{eq:2.21} will really imply~\eqref{eq:2.15}.
Let us divide now the even numbers $m$ in $[X/2, X]$ into at most $2^{|\mcR  |}$ different classes $\mathcal M(\mcR')$ according to the subset $\mcR' \subset \mcR  $ of zeros which belong to primitive characters with moduli dividing~$C_1(\ve)m$
\beq
\mathcal M(\mcR') = \bigl\{ m \in [X/2, X],\ 2\mid m,\ r_i \mid C_1(\ve) m \Leftrightarrow r_i \in \mcR'\bigr\}.
\label{eq:2.26}
\eeq
(The subset might be empty for some $\mcR' \subset \mcR  $; for example, if $\varrho_i \in \mcR'$, $\varrho_j \notin \mcR'$, $r_i = r_j$, or if $\underset{r_i \in \mcR'}{\mathrm{l.c.m.}} [r_i] > X C_1(\ve)$.)

Let us denote
\beq
q(\mcR') := \mathrm{l.c.m.} [r_i; \ r_i \in \mcR'].
\label{eq:2.27}
\eeq

Now we can delete all classes $\mathcal M(\mcR')$ with
\beq
q(\mcR') > X^{\vartheta},
\label{eq:2.28}
\eeq
since in this case clearly
\beq
|\mathcal M(\mcR')| \leq C_1 (\ve) X^{1 - \vartheta},
\label{eq:2.29}
\eeq
and the number of all classes is
\beq
2^{|\mcR  |} \leq 2^{K(\ve)} = C_2(\ve).
\label{eq:2.30}
\eeq

Let us fix now any concrete class $\mcR'$ with
\beq
q := q(\mcR') \leq X^{\vartheta}.
\label{eq:2.31}
\eeq
Due to \eqref{eq:2.30} it is sufficient to restrict our attention now for values $m$ with
\beq
m \in \mathcal M(\mcR').
\label{eq:2.32}
\eeq
Hence, by \eqref{eq:2.21} it is sufficient to show for any $q \leq X^{\vartheta}$
\beq
S_0 = \underset{\substack{\varrho_i, \varrho_j \in \mathcal E\\
(\varrho_i, \varrho_j) \neq (0, 0)}}{\sum\nolimits^{**}} q^{-A(\delta_i + \delta_j)} < 1 - 2\ve
\label{eq:2.33}
\eeq
where $A = 1/\vartheta$ and the notation $**$ abbreviates now the condition
\beq
r_i \mid q, \ r_j \mid q, \ \ \cond(\chi_i \chi_j) < C_0(\ve) \ \left( = \frac{\ve^3}{K^6(\ve)}\right).
\label{eq:2.34}
\eeq
Thus we managed to get rid of the condition $(\dagger)$, and it is sufficient to consider characters modulo the same $q \leq X^{\vartheta}$.
Further advantages compared to the earlier treatments (\cite{CL}, \cite{Li1}, \cite{Li2}, \cite{Lu}) are that

(i) both zeros $\varrho_i$ and $\varrho_j$ run only through zeros with a bounded height $|\gamma| \leq T(\ve)$
and

(ii) the second zero $\varrho_j$ runs for every fixed~$\varrho_i$ only through zeros belonging to characters $\chi_j$ with
\beq
\cond(\chi_i \chi_j) < C_0(\ve).
\label{eq:2.35}
\eeq

Since zeros of $L(s, \chi)$ and $L(s, \overline\chi)$ are conjugate it will be simpler for us to change the condition~\eqref{eq:2.35} to consider further on the inequality
\beq
S_0 = \sum\nolimits' q^{-A(\delta_i + \delta_j)} < 1 - 2\ve
\label{eq:2.36}
\eeq
where the condition $\sum'$ will mean later on
\beq
(1,1) \neq (\varrho_i, \varrho_j) \in \mathcal E, \quad r_i \mid q, \ r_j\mid q, \ \ \cond(\chi_i \overline \chi_j) < C_0 (\ve).
\label{eq:2.37}
\eeq
This form makes the quasi-diagonal form of $S_0$ clear: only those pairs of zeros count where the relevant primitive characters are the same up to a character with a bounded conductor.

Let us use further on the notation (this will change the values of $H$ and $T$ by a factor $A$)
\beq
\log q = \mathcal L, \quad  \lambda_i = \delta_i \mathcal L \leq H,
\quad |\mu_i| = |\gamma_i \mathcal L| \leq T.
\label{eq:2.38}
\eeq
Then we can rewrite $S_0$ as
\beq
S_0 = \sum\nolimits^\prime e^{-A(\lambda_i + \lambda_j)} .
\label{eq:2.39}
\eeq

According to \eqref{eq:2.37} we will say that two generalized exceptional characters $\chi$ and $\chi'$ are equivalent, in notation $\chi \sim \chi'$ if there is a chain of generalized exceptional characters $\chi_\nu$ $(\nu = 1,2, \dots, n)$ such that $\chi_1 = \chi$, $\chi_n = \chi'$
\beq
\cond(\chi_\nu \overline{\chi_{\nu + 1}}) < C_0(\ve) \quad \text{ for }\ \nu = 1, \dots,  n - 1.
\label{eq:2.40}
\eeq
Such a chain has at most $K \leq K(\ve)$ characters in it; hence if $\chi$ and $\chi'$ are equivalent, then
\beq
\cond(\chi, \overline \chi') < C_3(\ve) = C_0(\ve)^{K(\ve) - 1}.
\label{eq:2.41}
\eeq

We remark here that since by Davenport \cite{Dav}, Ch.~14
\beq
\delta \gg \frac1{\sqrt{q} \log^2 q},
\label{eq:2.42}
\eeq
there is no generalized exceptional character $\chi \sim \chi_0(\mod 1)$, so the sum $S_0$ in \eqref{eq:2.39} in fact does not contain any pair of singularities $(1, \varrho)$, just pairs of zeros.
In such a way we can distribute the generalized exceptional zeros into $M$ $(\leq K)$ classes according to the equivalence classes $\mathcal H_\nu$ $(\nu = 1,2, \dots M)$ of the generalized exceptional characters.
Thus we obtain
\beq
S_0 \leq S := \sum^M_{\nu = 1} S^2_\nu,
\label{eq:2.43}
\eeq
where $S_\nu$ denotes the quantity
\beq
S_\nu := \sum_{\varrho_{\nu, j} \in \mathcal E,
\ \chi_{\nu, j} \in \mathcal H_\nu} e^{-A \lambda_{\nu, j}}.
\label{eq:2.44}
\eeq

According to this it will be important to introduce (and later estimate) the quantities
\beq
N(\Lambda) = \sum_{\varrho_i \in \mathcal E, \ \lambda \leq \Lambda} 1
\label{eq:2.45}
\eeq
and
\beq
\label{eq:2.46uj}
N_\nu(\Lambda) = \sum_{\chi \in {\mathcal H}_\nu, \ \lambda = \lambda_\chi \leq \Lambda} 1, \ \ \ \ \widetilde N(\Lambda) = \max_{\nu} N_\nu(\Lambda).
\eeq

\section{Methods}
\label{sec:3}

The reduction to zeros corresponding to characters modulo a fixed $q \leq P$,
the fact that it is sufficient to consider zeros with bounded height and the quasi-diagonal form \eqref{eq:2.37} of the critical sum \eqref{eq:2.33} are all new features compared with the earlier methods applied to the exceptional set in the previous works (cf. \cite{CL}, \cite{Li1}, \cite{Li2}, \cite{Lu}).
A further advantage is that \eqref{eq:2.33} shows now strong similarities to the case of the estimation of Linnik's constant; in fact, it looks like a ``two-dimensional'' variant of Linnik's problem.
This gives hope to apply the very powerful methods and/or results of Heath-Brown \cite{Hea} used by him to achieve the huge improvement $L \leq 5.5$ in the estimation of Linnik's constant compared to the earlier result $L = 13.5$ of Chen and Liu~\cite{CL2}.

The estimation of \eqref{eq:2.33} will be based on the following three principles, mentioned and used by Heath-Brown \cite{Hea}.

\noindent
{\bf Principle 1.} Zero-free region for $\prod\limits_{\chi(\mod q)} L(s, \chi)$.

\noindent
{\bf Principle 2.} Deuring--Heilbronn phenomenon.

\noindent
{\bf Principle 3.} `Log-free' zero density estimates.

\medskip
For the proof of the result $E(X) < X^{1 - \vartheta}$ it will suffice to take over from Heath-Brown's work \cite{Hea} a small part of his results concerning Principles~1 and~2 (see Theorems E, F, G in our next section) partially in the form improved by Xylouris \cite{Xyl}.
In the forthcoming papers, when proving sharper inequalities for $E(X)$ we will need much more results of this type and in many cases in somewhat stronger form.

On the other hand, the zero density estimates of \cite{Hea}, as well as some similar ones of others, used in earlier examination of Linnik's problem do not suite for our purposes.

Heath-Brown starts, namely, with a weighted average over primes, which does not seem to work in Goldbach's problem.
Since in this way in Linnik's problem zeros of the same $L$-function can be treated together, it is sufficient to estimate the number of $L$-functions $\mod q$, having at least one zero in a given range, instead of the total number of zeros in the relevant range as in case of usual density theorems.
The corresponding density theorems of Chen--Liu \cite{CL} and H. Z. Li \cite{Li1}, \cite{Li2} and Lu \cite{Lu} are far too weak as to yield Theorem~1.
Therefore we will show a new log-free density theorem (Theorem~C) which counts all zeros and is still just slightly weaker than the corresponding result of Heath-Brown \cite{Hea}, Lemma 11.1 which counts only the number of $\mathcal L$-functions belonging to these  zeros.

Another invention of Heath-Brown~\cite{Hea} is the proof of a `new density theorem', his Lemma~12.1, which works only for zeros very near to the line $\sigma = 1$ (approximately in the region $\sigma > 1 - 5/(4 \log q)$).
This result can also be extended for the number of $L$-functions having at least one zero in the relevant range.
The method of proof of this result is nearer to the proof of the Deuring--Heilbronn phenomenon than to that of the density theorems.
Concerning this result we succeeded in modifying the proof in such a way as to yield the same estimate without any loss for the total number of zeros (cf.\ Theorems H and I).
In another version of this method we can directly prove a weighted density theorem, essentially for the weights appearing in \eqref{eq:2.33} which is even more useful than unweighted density theorems (cf.\ Theorem J).

A new feature of our case is (which does not appear in Linnik's problem) that we need density theorems for zeros of a restricted class of $L$-functions belonging to equivalent characters (cf.\ \eqref{eq:2.40}, \eqref{eq:2.41} and \eqref{eq:2.46uj}).
The usual proofs for density theorems naturally work for these cases as well, and they usually yield somewhat stronger results in this case, like a comparison of Corollaries~\ref{cor:1} and \ref{cor:2} show in the next section.
However, an improvement of the technique applied by Heath-Brown in the proof of his new density theorem allows us to reach drastic improvements for the number of zeros in one equivalence class $(N_\nu(\Lambda))$ compared with the case of all $L$-functions $(N(\Lambda))$.
In the range $\sigma > 1 - 5/(4 \log q)$ $( \Leftrightarrow \lambda < 5/4)$, for example, we obtain at most $7$ zeros for one class, instead of the bound more than a hundred, supplied by the old or new density theorems of Heath-Brown (cf.\ \cite{Hea}, Tables 12 and 13) for the number of all $L$-function having at least one zero in the same range.
A further advantage of this method is that the bounds obtained in this way for $N_\nu(\Lambda)$ remain valid in the much wider range $\sigma > 1 - 6/\log q$.
After this, the contribution of zeros with $\lambda > 6$ can be estimated already very efficiently by Corollary~\ref{cor:2}.

\section{Auxiliary results}
\label{sec:4}
In the present section we will list the needed auxiliary theorems for the estimation of the crucial quantity for $S_0$ in \eqref{eq:2.33}.
These auxiliary theorems give important information about the distribution of zeros of $L$-functions near to $s = 1$.

The first one is a weighted density theorem which is the generalization of Heath-Brown's Lemma~11.1 \cite{Hea} for the case when we estimate the total number of zeros instead of the number of $L$-functions having at least one zero in the given region.
The result will have two different versions according to which we consider all $L$-functions or just a class of similar $L$-functions in the sense of Theorem~D.

\medskip
\noindent
{\bf Theorem C.} {\it Let $C', \ve, c_1, c_2, \Lambda > 0$ be given, $q \geq q(\ve, c_1, c_2, C', \kappa)$,
\beq
\Lambda_\infty = \frac13 \log\log \mathcal L, \ \ \varphi \geq \max_{\chi (\mod q)} \varphi(\chi), \ \
r = \frac{\varphi + c_1 + c_2}{3}, \ \ x_0 = 2\varphi + 3c_1 + c_2,
\label{eq:4.1}
\eeq
\beq
w(\varrho) = w\bigl(1 - \mathcal L(\lambda + i \mu) \bigr) = e^{-2(x_0 + \kappa) \lambda - \frac{r + \kappa}{2} d}, \quad
d = \max(0, \Lambda - \lambda).
\label{eq:4.2}
\eeq
Then we have with an absolute constant~$C$ depending on $C'$
\beq
\sum_{\substack{\varrho = \varrho_\chi, \ \chi (\mod q)\\
\lambda \leq \Lambda_\infty\\
|\mu| \leq C'}} w(\varrho) \leq (1 + C \ve) C_1 \sqrt{B_{\varphi, \kappa} (\Lambda) B_{\varphi, r}(\Lambda)}/\kappa = C_2 (\varphi, \kappa, \Lambda) (1 + C\ve),
\label{eq:4.3}
\eeq
where}
\beq
C_1 = \frac{2\varphi + 2c_1 + c_2}{2c_1 c_2},
\label{eq:4.4}
\eeq
\beq
B_{\varphi, \omega}(y) = \frac{\varphi}2 \left( \frac{1 - e^{-2\omega y}}{y} \right) + \left( \frac{1 - e^{-\omega y}}{y} \right)^2 .
\label{eq:4.5}
\eeq

\noindent
{\bf Theorem D.} {\it Suppose that $\mathcal K$ is a set of characters $\chi_i \mod q$ with the condition that for all pairs $\chi_i, \chi_j \in \mathcal K$
\beq
\cond(\chi_i \overline \chi_j) \leq q^\ve.
\label{eq:4.6}
\eeq
Further let us suppose the conditions of Theorem~C with
\beq
x_0 = \varphi + 3c_1 + c_2, \quad \varphi \geq \max_{\chi \in \mathcal K} \varphi(\chi).
\label{eq:4.7}
\eeq
Then \eqref{eq:4.3} holds if the sum is restricted for zeros of $L(s, \chi)$, $\chi \in \mathcal K$.}

\medskip
\noindent
{\bf Remark 1.} Although the estimate on the right-hand side of \eqref{eq:4.3} remained unchanged, the new estimate is stronger, since the new weights will be larger, due to the smaller choice of $x_0$ in \eqref{eq:4.7}.

Choosing $c_1 = 1/12$, $c_2 = 1/4$, $\varphi = 1/3$, $\kappa = 1/6$, ($C_1 = 26$) we are led to the following results (to be used in Section~9).

\begin{Cor}
\label{cor:1}
Let $\Lambda_0 = 1.311$, $\Lambda_1 = 2.421$, $\Lambda_2 = 3.96$, $\Lambda_3 = 5.8$,
$E_0 = 22.281$, $E_1 = 15.6$, $E_2 = 10.4$, $E_3 = 7.01$.
Then we have for $i = 0, 1,2$:
\beq
\sum_{\lambda \leq \Lambda_\infty} e^{-\frac83  \lambda - \frac{5}{12} \max(0, \Lambda - \lambda)} < E_i \quad \text{ for } \ \Lambda \geq \Lambda_i,
\label{eq:4.8}
\eeq
\beq
\sum_{\substack{\chi  \in \mathcal K\\
\lambda \leq \Lambda_\infty}} e^{-2 \lambda - \frac{5}{12} \max(0, \Lambda - \lambda)} < E_i \quad \text{ for } \ \Lambda \geq \Lambda_i.
\label{eq:4.9}
\eeq
\end{Cor}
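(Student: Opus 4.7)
The plan is to specialize Theorems C and D at the stated parameters, verify that both applications produce the weight appearing in the corollary and a common right-hand side, establish monotonicity of that right-hand side in $\Lambda$, and then reduce to a numerical check at the three values $\Lambda = \Lambda_i$.

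First I would insert $c_1 = 1/12$, $c_2 = 1/4$, $\varphi = 1/3$, $\kappa = 1/6$ into formulas \eqref{eq:4.1}, \eqref{eq:4.4}, \eqref{eq:4.5}. This immediately gives $C_1 = 26$. In Theorem C one gets $x_0 = 2\varphi + 3c_1 + c_2 = 7/6$, so the linear coefficient is $2(x_0 + \kappa) = 8/3$, matching the exponent of $\lambda$ in \eqref{eq:4.8}; in Theorem D the smaller choice $x_0 = \varphi + 3c_1 + c_2 = 5/6$ yields $2(x_0 + \kappa) = 2$, matching \eqref{eq:4.9}. Under the specialization the coefficient of $d = \max(0, \Lambda - \lambda)$ reduces to $5/12$, matching the exponent in both estimates. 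Thus the right-hand side of \eqref{eq:4.3} takes the common form
\begin{equation*}
C_2(\varphi, \kappa, \Lambda) \;=\; \frac{C_1}{\kappa}\sqrt{B_{\varphi, \kappa}(\Lambda)\,B_{\varphi, r}(\Lambda)} \;=\; 156\sqrt{B_{1/3,\,1/6}(\Lambda)\,B_{1/3,\,2/3}(\Lambda)},
\end{equation*}
independently of whether Theorem C or Theorem D is being invoked; the factor $1 + C\ve$ can be absorbed by taking $\ve$ small, since $E_i$ is required to be a strict upper bound.

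Second I would verify that $C_2(\Lambda)$ is monotonically decreasing in $\Lambda$. The identity $(1-e^{-ay})/y = \int_0^a e^{-ty}\,dt$ exhibits this quotient as a positive and strictly decreasing function of $y > 0$ for every $a > 0$, so each of the two summands in \eqref{eq:4.5} defining $B_{\varphi, \omega}$ is decreasing, and hence so is $C_2$. Consequently, it suffices to check each inequality at the single value $\Lambda = \Lambda_i$ and then invoke monotonicity to propagate it to every $\Lambda \geq \Lambda_i$. Because both sums \eqref{eq:4.8} and \eqref{eq:4.9} are bounded by \emph{the same} quantity $C_2(\Lambda)$, the single inequality $C_2(\Lambda_i) < E_i$ delivers both halves of the corollary simultaneously.

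It then remains to evaluate $B_{1/3,\,1/6}(\Lambda_i)$ and $B_{1/3,\,2/3}(\Lambda_i)$ for $i = 0, 1, 2$ and confirm $156\sqrt{B_{1/3,1/6}(\Lambda_i)B_{1/3,2/3}(\Lambda_i)} < E_i$. The main obstacle is precisely this numerical step: the constants $\Lambda_i$ are clearly tuned so that $C_2(\Lambda_i)$ only just drops below $E_i$ (a rough hand calculation at $\Lambda_0 = 1.311$ already produces $C_2(\Lambda_0) \approx 22.2$, perilously close to $E_0 = 22.281$). One must therefore carry the elementary exponentials in \eqref{eq:4.5} to several significant digits to certify the strict inequalities; there is no further analytical content, and the whole corollary reduces to this tabulation, which is then fed into the global estimate of $S_0$ in Section~9.
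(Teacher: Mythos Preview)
Your proposal is correct and matches the paper's approach exactly: the paper simply states the parameter choice $c_1=1/12$, $c_2=1/4$, $\varphi=1/3$, $\kappa=1/6$ (yielding $C_1=26$, $r=2/3$) before the corollary, and in the Remark following Corollary~\ref{cor:2} records the monotonicity of $B_{\varphi,\omega}(y)$ in $y$ to reduce to the endpoint values $\Lambda_i$. Your write-up makes explicit the verification of the exponents $8/3$, $2$, $5/12$ and the common right-hand side $156\sqrt{B_{1/3,1/6}(\Lambda)B_{1/3,2/3}(\Lambda)}$, which the paper leaves to the reader; the remaining content is indeed just the tight numerical check (your estimate $C_2(\Lambda_0)\approx 22.28$ is accurate).
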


\begin{Cor}
\label{cor:2}
With the notation of \eqref{eq:2.45}--\eqref{eq:2.46uj} and Corollary~\ref{cor:1} we have
\beq
N(\Lambda) < E_i e^{8 \Lambda / 3} \quad \text{ for } \ \Lambda \geq \Lambda_i,
\label{eq:4.10}
\eeq
\beq
N_\nu (\Lambda) < E_i e^{2 \Lambda} \quad \text{ for } \ \Lambda \geq \Lambda_i.
\label{eq:4.11}
\eeq
\end{Cor}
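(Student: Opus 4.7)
The plan is to deduce Corollary~\ref{cor:2} from Corollary~\ref{cor:1} in the most direct way: by simply dropping the weight on every zero counted by $N(\Lambda)$ (respectively $N_\nu(\Lambda)$) and using a uniform pointwise lower bound for that weight on the range $\lambda\leq\Lambda$. Since all quantities involved are positive, a weighted sum always dominates the counting function multiplied by the minimum weight attained on the relevant range.

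First I would examine the weight $w(\varrho)=e^{-\frac83\lambda-\frac{5}{12}\max(0,\Lambda-\lambda)}$ appearing in \eqref{eq:4.8} as a function of $\lambda\in[0,\Lambda]$. There, $\max(0,\Lambda-\lambda)=\Lambda-\lambda$, so the exponent becomes $-\frac83\lambda-\frac{5}{12}(\Lambda-\lambda)=-\frac{5}{12}\Lambda-\bigl(\frac83-\frac{5}{12}\bigr)\lambda$, and since $\frac83-\frac{5}{12}=\frac{27}{12}>0$ the weight is strictly decreasing in $\lambda$. Hence its minimum on $[0,\Lambda]$ is attained at $\lambda=\Lambda$ and equals $e^{-\frac83\Lambda}$. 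Consequently
\[
N(\Lambda)\,e^{-\frac83\Lambda}\;\leq\;\sum_{\varrho_i\in\mathcal E,\;\lambda\leq\Lambda}e^{-\frac83\lambda-\frac{5}{12}(\Lambda-\lambda)}\;\leq\;\sum_{\lambda\leq\Lambda_\infty}e^{-\frac83\lambda-\frac{5}{12}\max(0,\Lambda-\lambda)}\;<\;E_i
\]
for $\Lambda\geq\Lambda_i$, the last inequality being exactly \eqref{eq:4.8}. Multiplying through by $e^{\frac83\Lambda}$ yields \eqref{eq:4.10}.

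The argument for $N_\nu(\Lambda)$ is identical with $\frac83$ replaced by $2$ in the weight of \eqref{eq:4.9}: the exponent $-2\lambda-\frac{5}{12}(\Lambda-\lambda)$ has slope $-\frac{19}{12}<0$ in $\lambda$, so the minimum of the weight over $\lambda\in[0,\Lambda]$ is $e^{-2\Lambda}$. Restricting the sum in \eqref{eq:4.9} to a single equivalence class $\mathcal H_\nu$ (which is an admissible choice of $\mathcal K$ in Theorem~D, owing to \eqref{eq:2.41}), the same majorisation gives $N_\nu(\Lambda)e^{-2\Lambda}<E_i$, i.e.\ \eqref{eq:4.11}.

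There is no genuine obstacle here: the corollary is a routine unsmoothing of the weighted bounds of Corollary~\ref{cor:1}. The only thing one has to verify is the sign of the slope of the exponent in $\lambda$ on $[0,\Lambda]$, which in both cases is negative, so that the pointwise minimisation at $\lambda=\Lambda$ is legitimate and produces exactly the exponents $\frac83\Lambda$ and $2\Lambda$ claimed in \eqref{eq:4.10} and \eqref{eq:4.11}.
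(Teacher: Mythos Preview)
Your argument is correct and is exactly the intended deduction: the paper does not spell out a separate proof of Corollary~\ref{cor:2} but treats it as the immediate ``unsmoothing'' of Corollary~\ref{cor:1}, obtained by bounding each weight from below by its value at $\lambda=\Lambda$ (namely $e^{-\frac83\Lambda}$, respectively $e^{-2\Lambda}$) and then dividing. The only additional remark the paper makes is that the $E_i$ are valid for all $\Lambda\geq\Lambda_i$ because $B_{\varphi,\omega}(y)$ is decreasing in $y$, which you implicitly use as well.
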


\noindent
{\bf Remark.} Since the functions $B_{\varphi, \omega}(y)$ are monotonically decreasing in $y$ for all non-negative values of the parameters $\varphi$ and $a$, the estimates $E_i$ arising from $\Lambda_i$ are valid for all $\Lambda \geq \Lambda_i$.

\medskip
The results listed as Theorems E, F, G below are Theorem~1 of Xylorius and Theorems 2 and 4 (more precisely Lemma~8.8) of Heath-Brown \cite{Hea} with the only change that the condition $|\gamma| \leq 1$ for the zeros can be substituted without any essential change in the proof for $|\gamma| \leq T$ if $q > q_0(T)$.
($T$ is in our case a large constant depending on~$\ve$.)
Thus in the following theorems let $\ve, T$ be positive constants,
\beq
M(s) = \prod_{\chi (\mod q)} L(s, \chi),
\label{eq:4.12}
\eeq
\beq
R(\alpha, T) = \left\{ s; \ \sigma \geq 1 - \frac{\alpha}{\log q}, \ |t| \leq T \right\}
\label{eq:4.13}
\eeq
and let us suppose that $q > q_0(T, \ve)$.

\medskip
\noindent
{\bf Theorem E.} {\it $M(s)$ has at most one zero in $R(0.44, T)$.
Such a zero, if it exists, is real and simple and corresponds to a non-principal character.}

\medskip
\noindent
{\bf Theorem F.} {\it $M(s)$ has at most two zeros, counted according to multiplicity, in $R(0.702, T)$.}

\smallskip
\noindent
{\bf Remark}. Heath-Brown \cite{Hea} proved this with $0.696$ in place of $0.702$.
The small improvement is due to Xylouris \cite[Tabellen 2, 3 and 7]{Xyl}.

\medskip
\noindent
{\bf Theorem G.} {\it Suppose that $\chi$ is a real non-principal character $\mod q$ with
\beq
L \left(1 - \frac{\lambda}{\log q}, \chi \right) = 0, \quad 0 < \lambda \leq 0.44.
\label{eq:4.14}
\eeq
Then $M(s)$ has only the zero $1 - \lambda / \log q$ in the region $R(\Lambda(\lambda), T) \cup R(1.18, T)$}
\beq
\Lambda(\lambda) = \min \left\{\left(\frac{12}{11} - \ve\right) \log \frac1{\lambda}, \frac13 \log\log\log q\right\}.
\label{eq:4.15}
\eeq

\smallskip
The fact that $M(s)$ has no other zeros in $R(\Lambda(\lambda), T)$ is exactly Lemma 8.8 of Heath-Brown~\cite{Hea}.
The absence of other zeros in $R(1.18, T)$ follows from Tables~4 and 7 of Heath-Brown \cite{Hea} (pp.\ 298, 301), which follow from his Lemmas 8.3 and 8.7, respectively.

In the following we will formulate the new density theorems which are improved forms of Lemma~12.1 of Heath-Brown~\cite{Hea}.
Although in the application we will work (unlike Heath-Brown) with a concrete function we will formulate the result more generally, similarly to \cite{Hea}.
(The following condition is the same as Conditions~1 and 2 of \cite{Hea} together.)

\medskip
\noindent
{\bf Condition 1.} Let $f$ be a non-negative continuous function from $[0, \infty)$ to $\mathbb R$, supported in $[0, t_0)$, twice differentiable on $(0, t_0)$ with $f''$ continuous and bounded.
Suppose that its Laplace transform
\beq
F(z) = \int\limits^{t_0}_0 e^{-zt} f(t) dt
\label{eq:4.16}
\eeq
satisfies
\beq
\mathrm{Re}\, F(z) \geq 0 \quad \text{ for } \ \text{\rm Re}\, z \geq 0.
\label{eq:4.17}
\eeq

\smallskip
We will work with the following pair of functions, satisfying Condition~1 (which appears in Lemma~7.2 of \cite{Hea})
\beq
g(u) = \frac1{30} (2 - u)^3 (4 + 6u + u^2) \quad \text{ with } \ u \in [0, 2]
\label{eq:4.18}
\eeq
\begin{align}
G(z) &= \int\limits^2_0 e^{-zu} g(u) du
\label{eq:4.19}\\
&= \frac{16}{15z} - \frac{8}{3z^3} + \frac{4}{z^4} - \frac{4}{z^6} + \frac{4e^{-2z}}{z^4} \left(\frac{z + 1}{z} \right)^2. \nonumber
\end{align}
The explicit form of $G(z)$ follows simply by computation. Further (cf.\ \cite{Hea}, Lemma~7.2)
\beq
g = h * h, \quad \text{ where } \ h(t) = 1 - t^2, \ \ -1 \leq t \leq 1.
\label{eq:4.20}
\eeq
Therefore we have
\beq
\mathrm{Re}(G(iy)) = \int\limits^2_0 g(u) \cos (uy) du = 2 \biggl( \int\limits^1_0 h(t) \cos ty\,dt \biggr)^2 \geq 0.
\label{eq:4.21}
\eeq
Finally, from Lemma~4.1 of \cite{Hea} (see also p.~279 of \cite{Hea}) we have by \eqref{eq:4.21}
\beq
\mathrm{Re}\, G(z) \geq 0 \quad \text{ for } \ \mathrm{Re}\, z  \geq 0.
\label{eq:4.22}
\eeq

Instead of the concrete functions $g(u)$ and $G(z)$ above we will use a one-parameter family of functions:
\beq
f = f_x(u) = xg(ux), \qquad u x \in [0, 2]
\Leftrightarrow u \in \left[0, \frac{2}{x}\right]
\label{eq:4.23}
\eeq
\begin{align}
F = F_x(z) &= \int\limits^{2/x}_0 e^{-zu} f_x(u) du = \int\limits^{2/x}_0 e^{-\frac{z}{x} ux} g(ux) x du
\label{eq:4.24}\\
&= \int\limits^2_0 e^{-\frac{z}{x} v} g(v) dv = G \Bigl(\frac{z}{x}\Bigr).\nonumber
\end{align}
An easy calculation shows (cf.\ Lemma~7.2 of \cite{Hea})
\beq
f(0) = \frac{16x}{15}, \quad F(0) = G(0) = \frac89, \quad F(-x) = G(-1) = \frac85.
\label{eq:4.25}
\eeq

In the applications (Theorems~I, H, J) the following additional property of the functions $F$ will have importance, which is satisfied for $G(z)$ for $A_0 = 13$, $B_0 = 1.25$, for example.

\medskip
\noindent
{\bf Condition 2.} There are non-negative constants $A_0$ and $B_0$ such that for any $t \in \mathbb R$
\beq
\mathrm{Re}\, \frac{G(a + it)}{G(a)} \geq \mathrm{Re}\, \frac{G(-b + it)}{G(-b)}
\label{eq:4.26}
\eeq
if
\beq
0 \leq a \leq A_0, \quad 0 \leq b \leq B_0.
\label{eq:4.27}
\eeq

\smallskip
\noindent
{\bf Remark.} Let us take $A_0$, $B_0$ arbitrary, non-negative constants, $\eta > 0$ fix.
Then for $\eta \leq a \leq A_0$, $0 \leq b \leq B_0$, $|t| > t_0 (\eta, A_0, B_0)$ we have from  \eqref{eq:4.19}
\beq
\mathrm{Re}\, \frac{G(a + it)}{G(a)} > \frac{c(A_0, \eta)}{t^2} > - \frac{c'(B_0)}{t^2} > \mathrm{Re}\, \frac{G(-b + it)}{G(-b)}
\label{eq:4.28}
\eeq
with positive constants $c(A_0, \eta)$, $c'(B_0)$.

This shows already that Condition~2 can be verified for the $G$ function in \eqref{eq:4.19} by the aid of computers for concrete values of $A_0$ and $B_0$ (consequently for the $F$-functions in \eqref{eq:4.24} if $x$, $A_0$ and $B_0$ are given).

This preparation makes possible to formulate the remaining 3 density theorems.
In these theorems we will use the notation \eqref{eq:2.45}--\eqref{eq:2.46uj}, where in \eqref{eq:2.46uj} we can ease the condition of equivalent characters without any loss for the final estimate.
Let $T > 0$, $\ve > 0$ be constants, $q > q(T, \ve)$, $\mathcal L = \log q$; zeros of $L$-functions belonging to characters $\chi$ $\mod q$ with $\varphi \geq \max \varphi(q)$ will be denoted (cf.\ \eqref{eq:2.38}) as
\beq
\varrho := \varrho_\chi := 1 - \delta + i\gamma, \quad \lambda:= \delta \mathcal L, \ \ \mu:= \gamma \mathcal L.
\label{eq:4.29}
\eeq
Suppose that with a $\lambda_0 \geq 0$ we have for all zeros of all $L(s, \chi)$, $\chi \, \mod q$
\beq
\lambda \geq \lambda_0.
\label{eq:4.30}
\eeq
We will use the following notation with $\lambda = \lambda_j \leq \Lambda$:
\beq
\frac{F(\lambda_j - \lambda_0)}{F(-\lambda_0)} = \psi_j, \quad
\frac{F(\Lambda - \lambda_0)}{F(-\lambda_0)} = \psi, \quad
\frac{f(0)\varphi}{2F(-\lambda_0)} = \xi, \quad \Delta = \psi - \xi > 0.
\label{eq:4.31}
\eeq

\medskip
\noindent
{\bf Theorem H.}
{\it Suppose that $f$ and $F$ satisfy Conditions~1 and~2.
Let $0 \leq \lambda_0 \leq B_0(F)$, $0 \leq \Lambda - \lambda_0 \leq A_0(F)$, $\Delta^2 > \xi + \ve$.
Then}
\beq
N(\Lambda) := N_T(\Lambda) = \sum_{|\gamma| \leq T, \ \lambda \leq \Lambda} 1 \leq \frac{1 - \xi}{\Delta^2 - \xi - \ve}.
\label{eq:4.32}
\eeq

\medskip
\noindent
{\bf Theorem I.} {\it Suppose that $\mathcal H$ is a set of characters $\chi\, \mod q$ with the property
\beq
\cond (\chi \overline\chi') \leq q^\ve \quad \text{ for } \ \chi, \chi' \in \mathcal H.
\label{eq:4.33}
\eeq
Then with the conditions of Theorem~H we have}
\beq
\widetilde N(\Lambda) = N_{T, \mathcal H} (\Lambda) = \sum_{\substack{
\chi \in \mathcal H\\
 |\gamma_\chi| \leq T, \ \lambda_\chi \leq \Lambda}} 1 \leq \frac{1 + \ve}{\Delta^2}.
\label{eq:4.34}
\eeq

\medskip
The following Theorem~J will enable us to estimate directly weighted sums over zeros, which arise in our problem.
This method partly reduces drastically the needed amount of calculations and also yields better estimates for the weighted sums than the usual treatment via partial summation (which cannot be performed easily in these cases due to the complicated forms of the upper bounds).
Further on we will suppose the Conditions of Theorem~H and additionally the existence of two constants $B$ and $C$ with
\beq
B > \max \bigl(C, t_0(f) \bigr) \geq 0
\label{eq:4.35}
\eeq
where $t_0 = t_0(f)$ is defined in Condition~A by $f(t) = 0$ for $t \geq t_0$.

First we state

\medskip
\noindent
{\bf Theorem J.} {\it Under the conditions of Theorem~H we have}
\beq
D(\Lambda) := \sum_{|\gamma_j| \leq T,\ \lambda_j \leq \Lambda}  (\psi_j - \psi) \leq \frac{1 - \xi + O(\ve)}{2\Delta}.
\label{eq:4.36}
\eeq

\medskip
Suppose that $F$ satisfies Condition~A, $d_0$ and $C'$ are given and with the $J$ unknown
variables $d_0 \geq d_1 \geq \dots \geq d_J \geq 0$
$(d_j = \Lambda - \lambda_j$, $j = 0, \dots, J)$ we know the upper bound
\beq
D^*(d_0) = \sum_{j \leq J} \bigl(F(d_0 - d_j) - F(d_0) \bigr) \leq C'.
\label{eq:4.37}
\eeq
We are interested in the maximal value of the quantity
\beq
S^* = \sum_{j \leq J} \bigl(e^{Bd_j} - e^{Cd_j}\bigr), \qquad B > C \geq 0,
\label{eq:4.38}
\eeq
under the additional constraint with given $\{e_j\}^J_1$
\beq
d_j \leq e_j, \quad e_1 \geq e_2 \geq \dots \geq e_J.
\label{eq:4.39}
\eeq
In this case we can find the maximum of~$S^*$ with a type of greedy algorithm as follows.
Suppose that $1 \leq r \leq J$ is defined as
\beq
\sum^{r - 1}_{j = 1} \bigl(F(d_0 - e_j) - F(d_0) \bigr) \leq C' < \sum^r_{j = 1} \bigl(F(d_0 - e_j) - F(d_0) \bigr).
\label{eq:4.40}
\eeq

\medskip
\noindent
{\bf Theorem K.} {\it Under the above conditions the optimal choice is ($r = 1$ is possible too, when \eqref{eq:4.41} is void) to choose
\beq
d_j = e_j \ \text{ for } \ j = 1,2, \dots, r - 1
\label{eq:4.41}
\eeq
and $d_r$ as the unique value with
\beq
F(d_0 - d_r) = F(d_0) + C' - \sum_{j = 1}^{r - 1} \bigl(F(d_0 - e_j) - F(d_0)\bigr).
\label{eq:4.42}
\eeq
Further, we choose $d_\nu = 0$ for $r < \nu \leq J$.}

\medskip
\noindent
{\bf Remark 1.}
If there is no $r \in (1, J)$ with \eqref{eq:4.40}, that is
\beq
\sum^J_{j = 1} \bigl(F(d_0 - e_j) - F(d_0) \bigr) \leq  C',
\label{eq:4.43}
\eeq
then the maximum of $S^*$ is clearly given by the choice $e_j = d_j$ for $1 \leq j \leq J$.

\medskip
\noindent
{\bf Remark 2.}
By the fact that $F$ is strictly monotonically decreasing if $f(t)$ is not identically~$0$, we obtain from \eqref{eq:4.42} that $d_r$ is completely determined and $0 \leq d_r \leq e_r$ due to $F(d_0 - e_r) \geq F(d_0 - d_r) \geq F(d_0)$.

\medskip
\noindent
{\bf Remark 3.}
Theorem~K itself refers to arbitrary numbers but in the applications we will use it with
\beq
d_j = \Lambda - \lambda_j, \ \ d_0 = \Lambda - \lambda_0, \ \ C' = F(-\lambda_0) \frac{1 - \xi + C \ve}{2\Delta}
\label{eq:4.44}
\eeq
in conjunction with estimate \eqref{eq:4.36} of Theorem~J.
Although Theorem~K itself does need only Condition~1 for $F$, Theorem~J needs both Conditions~1 and~2.

\medskip
\noindent
{\bf Remark 4.}
The conditions $d_i \leq e_i$, $i = 1, \dots J$ imply $J$ inequalities of the type $(m = 1,2, \dots, J)$
\beq
D^*(d_0, m) = \sum_{j \leq m} \bigl(F(d_0 - d_j) - F(d_0)\bigr) \leq C'_0(m)
\label{eq:4.45}
\eeq
with
\beq
C'_0(m) := C'_0(m; \{e_j\}) = \sum_{j \leq m} \bigl(F(d_0 - e_j) - F(d_0)\bigr).
\label{eq:4.46}
\eeq
Although it is not necessary for application in the present work (just in later parts of this series) we will examine the more general case when in place of the conditions $d_i \leq e_i$ we will have more generally a finite sequence of $M$ inequalities $(d_1 \geq \dots \geq d_M \geq 0)$
\beq
\widetilde D(d_0, m) = \sum_{j \leq m} F(d_0 - d_j) \leq c'(m) = c(m) + m F(d_0)
\label{eq:4.47}
\eeq
where $c(m) \geq c(m - 1)$ is an increasing sequence of real numbers satisfying for $m = 2, \dots M - 1$
\beq
c(m + 1) - c(m) \leq c(m) - c(m - 1) .
\label{eq:4.48}
\eeq
(If there exists an $m_0 = \min \{\nu; \ c(\nu) < 0\}$, then the system consists of at most $m_0 - 1$ elements.
In this case we cancel the inequalities with index $\geq m_0$, which is equivalent to the condition $M = m_0 - 1$, $c(j) \geq 0$ for $j = 1, \dots, M$.)

\medskip
\noindent
{\bf Theorem L.} {\it Under the above conditions the maximum of $S^*$ in \eqref{eq:4.38} (with $M = J$) is achieved for the uniquely determined sequence $\{d_m\}^M_{m = 1}$ for which equality holds in \eqref{eq:4.47} for all $m = 1, \dots, M$.}

\medskip
\noindent
{\bf Remark 5.}
If we add at the end some terms $d_m = 0$ with $m > M$ this does not change the value of~$S^*$, so allowing some extra conditions with $M < m \leq R$ ($R$ a fixed large constant) $c(m) = c(M)$, will definitely not decrease the maximum.
In this way we can work in the applications with an a priori determined (large but bounded) number $M$ of variables (since we know that in our application the number of zeros with $\lambda_j \leq \lambda$ satisfies a bound depending on $\lambda$ -- see e.g.\ Corollary~\ref{cor:2}) and the obtained upper bound for the new (extended) sum $S^*$ will constitute an upper bound for the original~$S^*$.

\medskip
\noindent
{\bf Remark 6.}
As we substituted the conditions $d_i \leq e_i$ by its consequence \eqref{eq:4.45}--\eqref{eq:4.46} this may theoretically increase the maximum value~$S^*$.
However, Theorem~L tells us that this is not the case since for the maximum configuration $\{d_i\}$ we will in fact have $d_i = e_i$ for $i \leq J$, since all the inequalities \eqref{eq:4.45} are sharp.

\section{Proof of Theorems C and D}
\label{sec:5}

In this section we will prove Theorems C and D.
The first part of the proof will be very similar to the proof presented in Section~3 of \cite{Pin} which follows the works of Heath-Brown \cite{Hea} and Graham \cite{Gra}.
The second part of the proof will use also ideas of Heath-Brown \cite{Hea}, however not from the proof of the corresponding density theorem, but from Section~13 of \cite{Hea}.

Following more closely \cite{Hea} and slightly different from (3.14) of \cite{Pin} we will use the notation
\beq
U \! =\! q^u, \ U_0 \! =\! q^{u_0}, \ V \!= \! q^v, \ W = q^w, \ X_1 = q^{x_1}, \ X = q^x, \ X_0 = q^{x_0},\ \overline U = U \mathcal L^2,
\label{eq:5.1}
\eeq
\beq
\gathered
\ve_1 = \ve^2 / 100,\ \ w = c_1 - \ve,\ \ v = u + c_2,\ \ x_0 = u_0 + r, \\
x_1 = x_0 + \kappa,\ \  u = u_0 + \eta,\ \ x = x_0 + \eta,\ \ 0 \leq \eta \leq \kappa.
\endgathered
\label{eq:5.2}
\eeq
Further, in case of Theorem C, we will choose
\beq
u_0 = \varphi + 2c_1, \quad x_0 = 2\varphi + 3c_1 + c_2, \quad r = \varphi + c_1 + c_2,
\label{eq:5.3}
\eeq
whereas in case of Theorem~D we set
\beq
u_0 = 2c_1, \quad x_0 = \varphi + 3c_1 + c_2, \quad r = \varphi + c_1 + c_2.
\label{eq:5.4}
\eeq

Similarly to \cite{Pin} and \cite{Hea} we will use Graham's weights
\begin{align}
\psi_d &= \begin{cases}
\mu(d) &\text{ for } \ 1 \leq d \leq \overline U\\
\mu(d) \frac{\log(V / d)}{\log (V / \overline U)} &\text{ for }\ \overline U \leq d \leq V\\
0 & \text{ for } \ d \geq V
\end{cases} ,
\label{eq:5.5} \\
\theta_d &= \begin{cases} \mu(d) \frac{\log W/d}{\log W} & \ \, \text{ for } \ 1 \leq d \leq W\\
0 & \ \, \text{ for } \ d \geq W \end{cases},
\label{eq:5.6}
\end{align}
\beq
\Psi(n) = \sum_{d\mid n} \psi_d, \qquad \vartheta (n) = \sum_{d \mid n} \theta_d,
\label{eq:5.7}
\eeq
further, the functions
\begin{align}
F(s) &= \sum_{i \leq V, \ j \leq W} \psi_i \theta_j \chi \bigl(([i,j])\bigr) [i,j]^{-s},
\ \ G_q(s) = \sum_{i \leq W, \ j \leq W} \theta_i \theta_j [i,j]^{-s},
\label{eq:5.8}\\
S(Y) &= \sum^\infty_{n = 1} \Psi(n) \vartheta(n) \chi(n) n^{-\varrho} e^{-n/Y} = \frac1{2\pi i} \int\limits_{(1)} L(s + \varrho, \chi) F(s + \varrho) \Gamma(s) Y^s ds,
\label{eq:5.9}
\end{align}
where $[i,j]$ denotes the least common multiple of $i$ and $j$ ($\mathrm{Re}\, \varrho = \beta$).
We move the line of integration in \eqref{eq:5.9} to $\mathrm{Re}\, s = 1 - \beta - 1/k$ with $k = \lceil 4 \ve^{-1}\rceil$, and obtain analogously to p.~318 of \cite{Hea} the estimate
\beq
S(X) \ll (q^\varphi V W X^{-1})^{1/k} q^{1/k^2} \mathcal L^3 X^{1 - \beta} \ll (q^\varphi VW X^{-1})^{1/k} q^{\frac2{k^2} - \ve_1} \ll q^{-\ve_1}
\label{eq:5.10}
\eeq
in view of $1 - \beta \leq \ve_1 / x$, $F(s + \varrho) \ll \sum\limits_{n \leq VW} d^2(n) n^{-1 + 1/k} \ll (VW)^{1/k} \mathcal L^3$ and
\beq
x - v - w - \varphi = x_0 - u_0 - c_2 - w - \varphi = \ve > 2/k,
\label{eq:5.11}
\eeq
analogously to (11.8)--(11.10) of \cite{Hea}, and (3.24)--(3.27) of \cite{Pin}.

The relation $\Psi(n) = 0$ for $2 \leq n \leq U \mathcal L^2 = \overline U$ implies
\beq
S(U) = e^{-1/U} + O \biggl(\sum_{n > \mathcal L^2 U} d^2(n) e^{-n/U} \biggr) = 1 + O\left(\frac1{U}\right).
\label{eq:5.12}
\eeq
This and \eqref{eq:5.10} yield
\beq
\sum^\infty_{n = 1} \Psi(n) \vartheta(n) \bigl(e^{-n/X} - e^{-n/U} \bigr) \chi(n) n^{-\varrho} = -1 + O (q^{-\ve_1}).
\label{eq:5.13}
\eeq
We will use now Hal\'asz' inequality in the simple form given by Lemma~1.7 of \cite{Mon}, with
\begin{align}
a_n &= \Psi(n) \vartheta(n) n^{-\frac12} \bigl(e^{-\frac{n}{X}} - e^{-\frac{n}{U}} \bigr)
\label{eq:5.14}\\
b_n &= \vartheta^2(n) \bigl(e^{-\frac{n}{X}} - e^{-\frac{n}{U}} \bigr), \quad s_j = \varrho_j - \frac12.
\nonumber
\end{align}
\[
\xi = \Bigl( \Psi(n)n^{-\frac12} \sqrt{e^{-n/X} - e^{-n/U}} \Bigr)^\infty_{n = 1} , \quad \varphi_j = \Bigl( \sqrt{b_n} \chi_j(n) n^{-s_j} \Bigr)^\infty_{n = 1}
\]
\[
(\varphi_j, \xi) = f(s_j, \chi_j) = \sum^\infty_{n = 1} a_n \chi_j(n) n^{-s_j}
\]
\[
(\varphi_i, \varphi_j) = B\bigl( s_i + \overline s_j, \chi_i \overline\chi_j\bigr), \quad
B(s, \chi) = \sum^\infty_{n = 1} b_n \chi(n) n^{-s}.
\]
Here we have analogously to \cite{Hea}, (11.14) and \cite{Pin}, (3.32)
\begin{align}
\|\xi\|^2 = \sum^\infty_{n = 1} \frac{|a_n|^2}{b_n} &= \bigl(1 + O (1 / \mathcal L) \bigr) \frac{2x - u - v}{2(v - u)}
= \left( 1 + O \left( \frac1{\mathcal L}\right)\right) \mathcal C_0, \label{eq:5.15}\\
\mathcal C_0 &= \frac{2\varphi + 2c_1 + c_2}{2c_2}.\nonumber
\end{align}
Any term with $\chi_i \overline\chi_j \neq \chi_0$ ($\chi_0$ is the principal character $\mathrm{mod}\, q$) will be, similarly to \eqref{eq:5.10}, in case of Theorem~C
\begin{align}
B(s_i + \overline s_j, \chi_i \overline\chi_j)
&= \sum^\infty_{n = 1} \vartheta^2(n) \chi_i \overline \chi_j(n) \bigl(e^{-\frac{n}{X}} - e^{-\frac{n}{U}} \bigr) n^{-(\varrho_i + \overline\varrho_j - 1)}
\label{eq:5.16}\\
&\ll (q^\varphi W^2 U^{-1})^{\frac1{k}} q^{\frac2{k^2}} \ll q^{-\ve_1} \nonumber
\end{align}
whereas in case of Theorem~D:
\beq
B(s_i + \overline s_j, \chi_i \overline \chi_j) \ll (W^2 U^{-1})^{\frac1k} q^{\frac{2}{k^3}} \ll q^{-\ve_1}.
\label{eq:5.17}
\eeq

If $\chi_i \overline \chi_j = \chi_0$, that is, $\chi_i = \chi_j$, then, similarly to (3.35)--(3.38) of \cite{Pin}, we have
\beq
B(s_i + \overline s_j, \chi_0) = \frac{\varphi(q)}{q} G_q(1) \Gamma(2 - \varrho_i - \overline \varrho_j) \bigl(X^{2 - \varrho_i - \overline \varrho_j} - U^{2 - \varrho_i - \overline \varrho_j}\bigr) + O (q^{-\ve_1})
\label{eq:5.18}
\eeq
where the real quantity $G_q(1)$ satisfies
\beq
\left| \frac{\varphi(q)}{q} G_q(1) \right| \leq \frac{1 + O(1 / \mathcal L)}{w \mathcal L},
\label{eq:5.19}
\eeq
by the Proposition after (3.36) in \cite{Pin}.

Until now we followed quite closely \cite{Hea}, Section 11 and \cite{Pin}.
The above considerations were valid for all values $\ve, c_1, c_2, \kappa, x_0, \eta$, which determine the values of the remaining parameters $u_0$, $x_0$, $v$, $w$, $u$ and $x$.
Now we will take an average over $\eta$ with $0 \leq \eta \leq \kappa$.

Using Hal\'asz's inequality (Lemma 1.7 of \cite{Mon}) with the notation
\beq
\gathered
w_j = w(\varrho_j) = e^{-2x_1\lambda_j - \frac{\kappa + r}{2} d_j} \\
 z_{i,j} = \mathcal L(2 - \varrho_i - \overline \varrho_j) = \mathcal L(\delta_i + \delta_j + i(\gamma_j - \gamma_i)) = \lambda_i + \lambda_j + i(\mu_j - \mu_i)
\endgathered
\label{eq:5.20}
\eeq
we obtain from the relations \eqref{eq:5.1}--\eqref{eq:5.4}, \eqref{eq:5.13}--\eqref{eq:5.19} with $C_1 = C_0/c_1$, after taking  average in $\eta \in [0,\kappa]$ which affects $x, u, X, U$.
\beq
(1 + O(\ve)) \biggl(\sum^J_{j = 1} w_j\biggr)^2 =
\frac{1 + O(\ve)}{\kappa} \int\limits_0^\kappa \biggl(\sum_{j = 1}^J w_j\biggr)^2 d\eta
\leq C_1 \sum^J_{j = 1} w_j \sum_{i \sim j} w_i |\mathcal H(z_{j, i})|
\label{eq:5.21}
\eeq
where we write $i \sim j$ if $\varrho_i$ and $\varrho_j$ are zeros of the same $L(s, \chi)$ and
\begin{align}
|\mathcal H(z)| :&= \frac{|\Gamma(z / \mathcal L)|}{\mathcal L} \cdot \frac1{\kappa}
\left| \bigl(e^{x_0 z} - e^{u_0 z}\bigr)  \frac{e^{\kappa z} - 1}{z} \right|
\label{eq:5.22}\\
&= \frac{1 + O(\ve)}{\kappa} \left| e^{(x_0 + \kappa) z} \frac{(1 - e^{-r z})(1 - e^{-\kappa z})}{z^2} \right| \nonumber\\
&\leq \frac{1 + O(\ve)}{\kappa} e^{x_1 a} \left| \frac{1 - e^{-rz}}{z}\right| \, \left|
\frac{1 - e^{-\kappa z}}{z} \right| =: \frac{1 + O(\ve)}{\kappa} \mathcal H_1(z) \nonumber
\end{align}
if $\mathrm{Re}\, z = a$.

We call the attention of the reader to the fact that while the value of the LHS of \eqref{eq:5.21} is independent of $\eta$ (up to $O(\ve)$) the RHS would actually depend on~$\eta$.
This dependence disappears only after taking the integral over $\eta$ and this phase is represented already in the form given on the RHS of \eqref{eq:5.21}.

In order to estimate for a given fixed zero $\varrho_j$ belonging to a given $\chi \neq \chi_0$, say, the sum over all terms $\mathcal H_1 (z_{j, i})$ $(\varrho_i = \varrho_\chi)$ we introduce the notation (cf.\ \cite{Hea}, p.~325) (with new parameters $\omega$ and $\lambda$, $\omega = \kappa$ or $r$)
\[
f_1(t) = \begin{cases} \sinh ((\omega - t)\lambda) &\text{for } 0 \leq t \leq \omega\\
0 & \text{for }  t \geq \omega
\end{cases},
\]
\begin{align}
F_{1, \omega}(z) = F_1(z) &= \int\limits^\infty_0 e^{-zt} f_1(t) dt = \frac12 \left\{ \frac{e^{\omega \lambda}}{\lambda + z} + \frac{e^{-\omega \lambda}}{\lambda - z} - \frac{2\lambda e^{-\omega z}}{\lambda^2 - z^2} \right\}
\label{eq:5.23}\\
F_{2, \omega}(z) = F_2(z) &= \left( \frac{1 - e^{-\omega z}}{z} \right)^2. \nonumber
\end{align}

As in (13.2) of \cite{Hea} we see that
\beq
\mathrm{Re}\, F_1(z) \geq \frac{\lambda e^{\omega\lambda}}{2} |F_2(\lambda + z)| \ \ \text{ for } \ \mathrm{Re}\, z \geq 0,
\label{eq:5.24}
\eeq
because the relation \eqref{eq:5.24} holds with equality for $\mathrm{Re}\, z = 0$ and therefore, by Lemma~4.1 of \cite{Hea}, \eqref{eq:5.24} holds for the whole halfplane~$\mathrm{Re}\, z \geq 0$.

Choosing the parameter $\lambda$ in \eqref{eq:5.23} as
\beq
\lambda = \lambda_j,
\label{eq:5.25}
\eeq
we obtain for a fixed $j$ by \eqref{eq:5.24} (the summation runs over zeros of $L(s, \chi)$)
\begin{align}
&\sum_{\substack{i; i \sim j\\
|\gamma_i - \gamma_j| < \mathcal L^{-1/2}}} \bigl| F_2(\lambda_j + \lambda_i + i(\mu_j - \mu_i) \bigr|
\label{eq:5.26}\\
&\leq \frac{2}{\lambda_j} e^{-\omega \lambda_j} \sum_{\substack{i; i \sim j\\ |\gamma_i - \gamma_j| < \mathcal L^{-1/2}}}  \mathrm{Re}\, F_1\bigl( (s_j - \varrho_i) \mathcal L\bigr) \nonumber
\end{align}
where $s_j = 1 + i\gamma_j$.

The contribution of all terms with $|\gamma_i - \gamma_\ell| > \mathcal L^{-1/2}$ (consequently $|\mu_i - \mu_j| > \mathcal L^{1/2}$)
 for any $\ell$ is clearly by well known log-free density theorems
\beq
\ll \Bigl(\sum w_j\Bigr)^2  e^{(x_0 + \kappa)} \frac{e^{3\Lambda_\infty}}{\mathcal L}
< \ve \Bigl( \sum w_i\Bigr)^2.
\label{eq:5.27}
\eeq
Applying the last displayed formula of the proof of Lemma 13.2 of \cite{Hea}:
\begin{align}
&\sum_{\substack{i; i \sim j\\ |\gamma_i - \gamma_j| < \mathcal L^{-1/2}}} \mathrm{Re}\, F_1\bigl((s_j - \varrho_i)\mathcal L\bigr)
\label{eq:5.28}\\
&\leq f_1(0) \left( \frac{\varphi}{2} + \ve \right) - \mathcal L^{-1} \sum^{\infty}_{n = 1} \Lambda(n) \mathrm{Re}\, \left(\frac{\chi(n)}{n^{s_j}} \right) f_1 \left( \frac{\log n}{\mathcal L} \right)
\nonumber\\
&\leq f_1(0) \left(\frac{\varphi}{2} + \ve \right) + \mathcal L^{-1} \sum^\infty_{n = 1} \Lambda(n) \frac{\chi_0(n)}{n} f_1 \left(\frac{\log n}{\mathcal L} \right) \nonumber\\
&\leq f_1(0) \left( \frac{\varphi}{2} + \ve\right) + F_1(0) + \ve.
\nonumber
\end{align}

So, we obtain finally from \eqref{eq:5.26}--\eqref{eq:5.28} for any fixed $j$
\begin{align}
\sum_{\substack{i; i \sim j\\ |\gamma_i - \gamma_j| < \mathcal L^{-1/2}}}
|F_2(z_{i,j})|
&\leq \frac{\varphi}{2} \left( \frac{1 - e^{-2\omega \lambda_j}}{\lambda_j} \right) + \left( \frac{1 - e^{-\omega \lambda_j}}{\lambda_j} \right)^2 + \ve
\label{eq:5.29}\\
&=: B_{\varphi, \omega} (\lambda_j) + \ve. \nonumber
\end{align}
This, together with \eqref{eq:5.21}, \eqref{eq:5.22}, \eqref{eq:5.23} and \eqref{eq:5.27} yields
\begin{align}
&(1 + O(\ve)) \biggl(\sum_j w_j\biggr)^2 \label{eq:5.30}\\
&= (1 + O(\ve)) \biggl( \sum_j e^{-2x_1 \lambda_j - \frac{r + \kappa}{2} d_j} \biggr)^2
\nonumber\\
& \leq C_1 \kappa^{-1} \sum_j \!\! \sum_{\substack{i \sim j\\
|\gamma_i - \gamma_j| < \mathcal L^{-1/2}}} \!\!
e^{-2x_1 (\lambda_i + \lambda_j) - \frac{r + \kappa}{2}(d_i + d_j)} e^{x_1
(\lambda_i \! + \! \lambda_j)} \sqrt{|F_{2,\kappa}(z_{i,j})F_{2,r}(z_{i,j})|}
\nonumber \\
& = C_1 \kappa^{-1} \sum_j \sum_{\substack{i \sim j\\ |\gamma_i - \gamma_j| < \mathcal L^{-1/2}}}
e^{-x_1 (\lambda_i + \lambda_j) - \frac{r + \kappa}{2}(d_i + d_j)}
\sqrt{|F_{2,\kappa}(z_{i,j})F_{2,r}(z_{i,j})|}.
\nonumber
\end{align}
Taking into account that
\beq
\frac{1 - e^{-x}}{x}
\label{eq:5.35}
\eeq
is monotonically decreasing for $x \geq 0$, we obtain that
\beq
e^{-\omega d_j} B_{\varphi, \omega} (\lambda_j) = B_{\varphi, \omega} (\lambda_j) \leq B_{\varphi, \omega} (\Lambda) \quad \text{ for } \ \lambda_j \geq \Lambda.
\label{eq:5.36}
\eeq
Further, as
\beq
\frac{e^x - e^{-x}}{x}
\label{eq:5.37}
\eeq
is monotonically increasing for $x \geq 0$, so is $e^{\omega\lambda}B_{\varphi,\omega}(\lambda)$.
Hence we obtain for $\lambda_j \leq \Lambda$
\beq
e^{-\omega d_j} B_{\varphi, \omega}(\lambda_j) = e^{-\omega \Lambda} e^{\omega \lambda_j} B_{\varphi, \omega} (\lambda_j) \leq B_{\varphi, \omega} (\Lambda).
\label{eq:5.38}
\eeq
Now using the trivial relation $|F_2(z_{i,j})| = |F_2(z_{j,i})|$, by $2ab \leq a^2 + b^2$ we obtain from \eqref{eq:5.35}--\eqref{eq:5.38}, \eqref{eq:5.29} and the Cauchy inequality
\begin{align}
&\sum_j \sum_{\substack{i \sim j\\
|\gamma_i - \gamma_j| < \mathcal L^{-1/2}}} e^{-x_1(\lambda_i + \lambda_j) - \frac{r + \kappa}{2} (d_i + d_j)}
\sqrt{|F_{2,\kappa}(z_{i,j})|\,|F_{2,r}(z_{i,j})|}
\label{eq:5.39}\\
&\leq \! \sum_j e^{-2x_1 \lambda_j - \frac{r + \kappa}{2} d_j}
\biggl(\!\sum_{\substack{i; i \sim j\\
|\gamma_i - \gamma_j| < \mathcal L^{-1/2}}}\!\!\!\! e^{-\kappa d_j} |F_{2,\kappa}(z_{i,j})|\!\biggr)^{\!\!\frac12}
\!\biggl(\!\sum_{\substack{i; i\sim j\\ |\gamma_i - \gamma_j| < \mathcal L^{-1/2}}}\!\!\!\! e^{-rd_j}F_{2,r}(z_{i,j})\!\!
\biggr)^{\!\! \frac12}
 \nonumber\\
&\leq \biggl(\sum_j w_j\biggr) \Bigl( \sqrt{B_{\varphi, \kappa}(\Lambda)B_{\varphi, r} (\Lambda)} + \ve\Bigr).
\nonumber
\end{align}

Consequently, from \eqref{eq:5.30} and \eqref{eq:5.39} we have
\beq
\sum_j w_j \leq (1 + O(\ve)) C_1 \sqrt{B_{\varphi, \kappa}(\Lambda) B_{\varphi, r}(\Lambda)}/\kappa =
(1 + O(\ve))C_2(\varphi, \kappa, \Lambda),
\label{eq:5.40}
\eeq
which proves Theorems C and D.

\section{Properties of the $G$-function}
\label{sec:6}
The following two lemmas show that the problem of showing Condition~2 for the $G$-function defined in \eqref{eq:4.19} can be reduced to its validity in a bounded region.
In the first 3 lemmas we will use explicit forms of $G(z)$ and $G'(z)$ as follows:
\begin{align}
G(z) &= \frac{16}{15z} - \frac{8}{3z^3} + \frac{4}{z^4} - \frac{4}{z^6} + \frac{4e^{-2z}}{z^4} \left(\frac{z + 1}{z}\right)^2,
\label{eq:6.1}
\\
G'(z) &= - \frac{16}{15z^2} + \frac8{z^4} - \frac{16}{z^5} + \frac{24}{z^7} - \frac{8e^{-2z}}{z^4} \left( 1 + \frac4{z} + \frac6{z^2} + \frac3{z^3} \right).
\label{eq:6.2}
\end{align}

The integral form of $G(z)$ in \eqref{eq:4.19} and $g(u) \geq 0$ trivially implies that for real $x \in R$
\beq
G(x) > 0, \quad G'(x) < 0, \quad G''(x) > 0, \dots\ .
\label{eq:6.3}
\eeq

Let $z = a + it$ and let us examine for fixed $t$ the behaviour of the functions
\begin{align}
\Psi_t(a) &= \frac{\mathrm{Re}\, G(a + it)}{G(a)}
\label{eq:6.4}
\\
\Phi_t(a) &= \mathrm{Re}\, G'(z) \cdot G(a) - \mathrm{Re}\, G(z) \cdot G'(a) = G^2(a) \frac{d\Psi_t(a)}{da}.
\label{eq:6.5}
\end{align}

\begin{Lem}
\label{lem:2}
For $0 \leq a \leq 13$, $|t| \geq 14$ we have
\beq
\Phi_t(a) > \frac{9G(a)}{|z|^4} > 0.
\label{eq:6.6}
\eeq
\end{Lem}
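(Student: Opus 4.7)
The plan is to isolate the leading asymptotic parts of $G(z)$ and $G'(z)$ from the explicit formulas \eqref{eq:6.1}--\eqref{eq:6.2}, identify the dominant contribution to $\Phi_t(a)$, show it exceeds $9G(a)/|z|^4$ with room to spare, and absorb the remaining error terms into the resulting margin. Writing $G(z) = \tfrac{16}{15z} + R_1(z)$ and $G'(z) = -\tfrac{16}{15z^2} + R_2(z)$ and using $\mathrm{Re}(1/z) = a/|z|^2$, $\mathrm{Re}(1/z^2) = (a^2 - t^2)/|z|^4$, the definition \eqref{eq:6.5} becomes
\[
\Phi_t(a) = \frac{16(t^2-a^2)}{15|z|^4}\,G(a) + \frac{16a}{15|z|^2}|G'(a)| + \mathrm{Re}(R_2(z))\,G(a) - \mathrm{Re}(R_1(z))\,G'(a).
\]
Both of the first two ``main'' terms are non-negative on the range $0\le a\le 13$, $|t|\ge 14$ by \eqref{eq:6.3} (since $a\ge 0$, $G'(a)\le 0$, and $t^2 - a^2 \ge 196 - 169 = 27 > 0$). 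In particular, the first alone is already at least $28.8\,G(a)/|z|^4$, leaving a margin of $19.8\,G(a)/|z|^4$ over the target $9G(a)/|z|^4$; the task reduces to absorbing the two remainder terms into this margin.

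For $|z|\ge 14$ and $a\ge 0$, termwise estimation of \eqref{eq:6.2} using $|e^{-2z}| = e^{-2a} \le 1$ yields
\[
|R_2(z)|\,|z|^4 \le 8 + \tfrac{16}{|z|} + \tfrac{24}{|z|^3} + 8e^{-2a}\Bigl(1 + \tfrac{4}{|z|} + \tfrac{6}{|z|^2} + \tfrac{3}{|z|^3}\Bigr) \le 9.15 + 10.54\,e^{-2a}.
\]
For $R_1$ the crude $O(|z|^{-3})$ bound is too generous to survive the subtraction. The key observation is that its leading piece satisfies $\mathrm{Re}(-8/(3z^3)) = 8a(3t^2 - a^2)/(3|z|^6) \ge 0$ on our range, so when multiplied by $-G'(a) \ge 0$ it contributes favourably and can be dropped in a lower bound. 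Writing $R_1 = -\tfrac{8}{3z^3} + R_1^*$, a termwise estimate gives $|R_1^*(z)|\,|z|^4 \le 4 + \tfrac{4}{|z|^2} + 4e^{-2a}(1+\tfrac{1}{|z|})^2 \le 4.02 + 4.59\,e^{-2a}$.

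The final ingredient is a uniform bound on $|G'(a)|/G(a)$. By the Cauchy--Schwarz inequality applied to $G(a) = \int_0^2 e^{-au} g(u)\,du$ and $G'(a) = -\int_0^2 u\,e^{-au}g(u)\,du$, we get $G'(a)^2 \le G(a)G''(a)$, i.e.\ $\log G$ is convex, so $-G'(a)/G(a)$ is decreasing in $a\ge 0$. Using $G(0) = 8/9$ and $|G'(0)| = 16/35$ (direct from \eqref{eq:4.18}), this gives $|G'(a)| \le \tfrac{18}{35}\,G(a)$ for every $a\ge 0$. Combining,
\[
\frac{|z|^4\,\Phi_t(a)}{G(a)} \ge \frac{16(t^2-a^2)}{15} - (9.15 + 10.54\,e^{-2a}) - \tfrac{18}{35}(4.02 + 4.59\,e^{-2a}) = \frac{16(t^2-a^2)}{15} - 11.22 - 12.9\,e^{-2a}.
\]
Over $0\le a\le 13$, $|t|\ge 14$, the right-hand side is minimized at $(a,t) = (13,14)$, where it equals $28.8 - 11.22 - O(e^{-26}) > 17$, comfortably above $9$. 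This establishes the left-hand inequality of \eqref{eq:6.6}; the right-hand inequality is immediate from $G(a) > 0$ in \eqref{eq:6.3}. The main obstacle is escaping the naive bound $|R_1(z)| = O(|z|^{-3})$, which combined with the crude $|G'(a)| \le 2G(a)$ would produce an error outweighing the margin; the remedy is peeling off the $-8/(3z^3)$ piece whose real part has a favourable sign, and sharpening to $|G'(a)|\le (18/35)G(a)$ via log-convexity of $G$.
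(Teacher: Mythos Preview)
Your proof is correct, but it takes a longer route than the paper's. The paper observes that on the half-plane $\mathrm{Re}\,z\ge 0$ one has $\mathrm{Re}\,G(z)\ge 0$ (this is \eqref{eq:4.22}, already established from $g=h*h$); since $G'(a)<0$, the entire term $-\mathrm{Re}\,G(z)\cdot G'(a)$ in $\Phi_t(a)$ is non-negative and may be discarded. After that, only $\mathrm{Re}\,G'(z)>9/|z|^4$ needs to be checked, which follows in one line from the explicit formula \eqref{eq:6.2} by crude termwise bounds.

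You instead decompose both $G(z)$ and $G'(z)$ into a leading piece and a remainder, and control the resulting four terms individually. To make this work you need two additional ingredients the paper does not: the sign observation $\mathrm{Re}(-8/(3z^3))\ge 0$ on the range (to avoid the lossy $O(|z|^{-3})$ bound on $R_1$), and the log-convexity inequality $|G'(a)|/G(a)\le |G'(0)|/G(0)=18/35$ via Cauchy--Schwarz on the integral representation. Both are valid, and the final numerical margin ($\approx 17.6$ versus the required $9$) is comfortable. The advantage of your approach is that it is self-contained and does not invoke \eqref{eq:4.22}; the cost is roughly three times the length. The paper's shortcut---dropping the whole $\mathrm{Re}\,G(z)$ contribution by sign---is the key simplification you missed. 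One small point: your assertion that the minimum occurs at $(a,t)=(13,14)$ deserves a word of justification, since $t^2-a^2$ is smallest there while $e^{-2a}$ is largest at $a=0$; but a check of the endpoint $a=0$ (where $16\cdot 196/15$ dominates easily) confirms it.
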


\noindent
{\bf Proof.} Since for $\mathrm{Re}\, z \geq 0$, we have $\mathrm{Re}\, G(z) \geq 0$ (cf.\ \eqref{eq:4.22}) and $0 < G(a) < 1$ it is sufficient to prove
\beq
\mathrm{Re}\, G'(z) > \frac{9}{|z|^4}.
\label{eq:6.7}
\eeq
From \eqref{eq:6.2} and \eqref{eq:4.25} we obtain by $|e^{-2z}| \leq 1$, $G(a) \leq G(0) = 8/9$ as claimed above.
Further,
\begin{align}
\mathrm{Re}\, G'(z) & \geq \frac{16}{|z|^4} \left( \frac1{15} (t^2 - a^2) - 1 - \frac3{|z|} - \frac3{|z|^2} - \frac3{|z|^3} \right)
\label{eq:6.8}\\
&\geq \frac{16}{|z|^4} \left( \frac{27}{15} - 1 - \frac3{14} - \frac3{14^2} - \frac3{14^3} \right) > \frac9{|z|^4}
\nonumber
\end{align}

\begin{Lem}
\label{lem:3}
For $a = -b \in [-1.25, \, 0]$, $|t| \geq 50$ we have
\beq
\Phi_t(a) > \frac{G(-b)}{25|z|^2}.
\label{eq:6.9}
\eeq
\end{Lem}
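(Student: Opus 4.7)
The plan is to parallel the strategy of Lemma~\ref{lem:2}, but to handle the fact that when $a=-b\le 0$ the term $-\mathrm{Re}\,G(z)\cdot G'(a)$ no longer has the favorable sign. Indeed, for $|t|$ large we will have $\mathrm{Re}\,G(z)\approx -16b/(15|z|^{2})<0$ while $G'(-b)<0$ by (6.3), so this term contributes \emph{negatively} to $\Phi_t(-b)$. The assumption $|t|\ge 50$ must therefore be used to generate enough slack so that the positive leading contribution $G(-b)\cdot\mathrm{Re}\,G'(z)$ dominates both the target $G(-b)/(25|z|^{2})$ and this unfavorable correction.

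First I would apply the explicit formulas (6.1)--(6.2) and peel off the dominant terms. In the region $a=-b\in[-1.25,0]$, $|t|\ge 50$, we have $|z|\ge 50$ and $|e^{-2z}|=e^{2b}\le e^{5/2}$, so every non-leading term in $G(z)$ and $G'(z)$ is bounded by an explicit constant multiple of $|z|^{-3}$ and $|z|^{-4}$ respectively. This gives
\[
\mathrm{Re}\,G'(z)=\frac{16(t^{2}-a^{2})}{15|z|^{4}}+O(|z|^{-4})=\frac{16}{15|z|^{2}}+O(|z|^{-4}),
\]
\[
\mathrm{Re}\,G(z)=-\frac{16b}{15|z|^{2}}+O(|z|^{-3}).
\]
Substituting into $\Phi_t(-b)$ and using $-G'(-b)=|G'(-b)|>0$, the leading part reduces to
\[
\Phi_t(-b)\ge\frac{16}{15|z|^{2}}\bigl[G(-b)-b|G'(-b)|\bigr]+O(|z|^{-3}).
\]

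The remaining analytic task is to show $\phi(b):=b|G'(-b)|/G(-b)\le 77/80-\eta$ on $[0,1.25]$, with $\eta>0$ chosen to absorb the $O(|z|^{-3})$ error (which, thanks to $|z|\ge 50$, is at most $c/(50|z|^{2})$ for an explicit constant $c$). Writing $G(-b)=\int_{0}^{2}e^{bu}g(u)\,du$ and $|G'(-b)|=\int_{0}^{2}ue^{bu}g(u)\,du$, I identify $\phi(b)=b\,\mathbb{E}_{b}[U]$, where $U$ has density proportional to $e^{bu}g(u)$ on $[0,2]$. Since $\mathbb{E}_{b}[U]$ is increasing in $b$ (its derivative equals $\mathrm{Var}_{b}[U]\ge 0$), the function $\phi$ is itself monotone increasing, so it suffices to verify the bound at $b=1.25$. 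Direct substitution into (6.1)--(6.2) at $z=-1.25$ yields $G(-1.25)\approx 1.90$ and $|G'(-1.25)|\approx 1.35$, whence $\phi(1.25)\approx 0.89<77/80=0.9625$, with comfortable room. The main obstacle is the careful bookkeeping of the exponential remainder $-4e^{-2z}(z+1)^{2}/z^{6}$, where $e^{2b}\approx 12$ is not small; the crude triangle inequality would lose too much, and one must exploit the extra decay $|z|^{-6}\le 50^{-4}|z|^{-2}$ to make this contribution genuinely small compared with $16G(-b)/(15|z|^{2})$, thereby closing the argument.
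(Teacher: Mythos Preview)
Your approach is essentially the paper's. Both proofs isolate the same leading contribution
\[
\Phi_t(-b)\approx\frac{16}{15|z|^{2}}\bigl(G(-b)-b\,|G'(-b)|\bigr),
\]
reduce the key inequality to showing $b\,|G'(-b)|<(8/9)G(-b)$ on $[0,1.25]$, establish this by a monotonicity argument that pins the worst case at $b=1.25$, and then absorb lower-order remainders. Your monotonicity proof is a genuine variant: you write $\phi(b)=b\,\mathbb{E}_b[U]$ and use $\tfrac{d}{db}\mathbb{E}_b[U]=\mathrm{Var}_b(U)\ge 0$, whereas the paper shows the auxiliary function $h(b)=\tfrac{8}{9}G(-b)+bG'(-b)$ is decreasing via $h'(b)=\tfrac{1}{9}G'(-b)-bG''(-b)<0$ and checks $h(1.25)>1/90$. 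The two are logically equivalent (your $\phi(b)<8/9$ is exactly $h(b)>0$), and your probabilistic phrasing is arguably cleaner.

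Two small points on the error step. First, the exponential remainder $4e^{-2z}(z+1)^{2}/z^{6}$ is effectively of order $|z|^{-4}$, not $|z|^{-6}$, since $|z+1|^{2}\sim|z|^{2}$; so the relevant saving is one factor of $|z|^{-2}\le 1/2500$, not $|z|^{-4}$. Second, the margin here is genuinely thin: after all errors your inequality becomes roughly $\phi(b)\lesssim 0.92-0.024\,|G'(-b)|/G(-b)$, and to close this the paper invokes a second uniform bound you do not mention, namely $G(-b)\ge 0.65\,|G'(-b)|$ (from $G(-b)\ge G(0)=8/9$ and $|G'(-b)|\le|G'(-1.25)|<1.36$), which converts the $|G'(-b)|$-weighted remainder back into a small multiple of $G(-b)$. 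With that extra ingredient your outline goes through.
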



\noindent
{\bf Proof.} We will use the notation $h(b) = \frac{8}{9} G(-b) + bG'(-b)$. Then
\beq
h'(b) = \frac{G'(-b)}{9} -b G''(-b) < 0,
\label{eq:6.10}
\eeq
and hence
\beq
h(b) \geq h(1.25) > 1/90 \ \text{ for } \ b \in [0,\, 1.25].
\label{eq:6.11}
\eeq
Further, by simple computation
\beq
G(-b) \geq G(0) = \frac89, \ |G'(-b)| \leq |G'(-1.25)| < 1.36 .
\label{eq:6.12}
\eeq
Now, from \eqref{eq:6.1}, \eqref{eq:6.2}, \eqref{eq:6.5}, and \eqref{eq:6.10}--\eqref{eq:6.12}
\beq
G(-b) \geq \max \left( \frac98 b|G'(-b)|, \, 0.65|G'(-b)|\right)
\label{eq:6.13}
\eeq
we obtain
\begin{align}
&\Phi_t(-b)\label{eq:6.14}\\
&\geq
\frac{16 G(-b)}{15 |z|^2} \left( \frac{1\! -\! (b / t)^2}{1\! +\! (b / t)^2} - \frac{7.5}{|z|^2}
\left(\! 1 + \frac2{|z|} + \frac3{|z|^3}\! + \! e^{2.5}\! \left(\! 1\! + \frac4{|z|} + \frac6{|z|^2} + \frac3{|z|^3} \! \right) \!\right) \!\right) \nonumber\\
&\quad - \frac{16|G'(-b)|}{15 |z|^2} \left( b +  \frac{3.75}{|z|^2} \left(1 + \frac{2}{|z|^2}+ \frac{2b}{3|z|^2} + e^{2.5} \frac{t^2 + 1}{t^2}
   \right)\right) \nonumber\\
& \geq \frac{16 G(-b)}{15|z|^2} \left(0.957 - \frac89 - 1.55 \cdot 0.015\right) > \frac{G(-b)}{25|z|^2}.
 \nonumber
\end{align}

The following lemma reduces the range for numerical check

\begin{Lem}
\label{lem:3masodik}
If $|t| \geq 8$, $\text{\rm Re }z = -b \in [-1.25, -0.14]$, then
\beq
\mathrm{Re}\, G(z) < - \frac1{140|z|^2} .
\label{eq:6.15}
\eeq
\end{Lem}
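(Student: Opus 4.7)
The approach is to work from the explicit expansion \eqref{eq:6.1} of $G(z)$ and, after multiplying the target inequality by $|z|^2$, establish
\[
|z|^2\,\mathrm{Re}\,G(z) < -\frac{1}{140}
\]
uniformly for $z = -b+it$ with $b\in[0.14,\,1.25]$ and $|t|\geq 8$. I would compute the real parts of the first two (pure rational) terms exactly and bound the remaining three terms by absolute values.

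Since $\mathrm{Re}(1/z) = -b/|z|^2$, the leading term $16/(15z)$ contributes $-16b/15$ to $|z|^2\,\mathrm{Re}\,G(z)$, which for $b \geq 0.14$ is at most $\approx -0.149$, already an order of magnitude larger in absolute value than $1/140 \approx 0.00714$. A direct computation gives $\mathrm{Re}(1/z^3) = b(3t^2-b^2)/|z|^6$, and since $|t|\geq 8 > b$ this is positive; hence the term $-8/(3z^3)$ contributes the \emph{negative} quantity $-8b(3t^2-b^2)/(3|z|^4)$, which is crucial to retain rather than discard. Using $|e^{-2z}| = e^{2b}\leq e^{2.5}$ and $|z+1|\leq |z|+1$, the contributions of the remaining three summands to $|z|^2\,\mathrm{Re}\,G(z)$ are bounded in absolute value by $4/|z|^2$, $4/|z|^4$, and $4e^{2b}|z+1|^2/|z|^4$ respectively.

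Combining these ingredients reduces the lemma to the elementary inequality
\[
-\frac{16b}{15} - \frac{8b(3t^2-b^2)}{3|z|^4} + \frac{4}{|z|^2} + \frac{4}{|z|^4} + \frac{4e^{2b}|z+1|^2}{|z|^4} < -\frac{1}{140}
\]
for $b\in[0.14,\,1.25]$ and $|t|\geq 8$. The main obstacle is the corner $(b,|t|)=(0.14,\,8)$, where the leading term is smallest in absolute value and the exponential bound is largest. At that point the five quantities on the left are approximately $-0.149$, $-0.017$, $0.063$, $0.001$, $0.084$, summing to $\approx -0.019$: the inequality holds but by a margin of only about $0.013$, and this is precisely why the helpful contribution $-8b(3t^2-b^2)/(3|z|^4)$ cannot be dropped. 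Away from this corner the margin is large: the positive terms decay at least like $|z|^{-2}$ while $-16b/15$ is $|t|$-independent (handling the large-$|t|$ regime), and for large $b$ the factor $e^{2b}$ is tempered by $|z|^4 \geq (b^2+64)^2$ while $-16b/15$ grows linearly. I would finish by showing that on $|t|\geq 8$ each positive term on the left is monotonically decreasing in $|t|$ and the negative rational term is increasing (less negative) in $|t|$, so the worst case occurs at $|t|=8$; the remaining one-variable inequality in $b\in[0.14,\,1.25]$ at $|t|=8$ can then be verified either by an elementary calculus argument or, in the spirit of the Remark following Condition~2, by a routine numerical check on a fine grid combined with a Lipschitz continuity estimate.
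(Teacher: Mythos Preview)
Your strategy matches the paper's: expand via \eqref{eq:6.1}, keep the exact leading contribution $-16b/15$ to $|z|^2\,\mathrm{Re}\,G(z)$, and bound the tail terms in modulus. The paper makes one simplification you do not: since $\mathrm{Re}\,z^{-3}=b(3t^2-b^2)/|z|^6>0$, the contribution of $-8/(3z^3)$ to $\mathrm{Re}\,G(z)$ is negative and may simply be dropped (bounded above by $0$). Combined with the sharper estimate $\bigl|(z+1)/z\bigr|^2 \leq 1+|z|^{-2}\leq 65/64$ (rather than the crude $|z+1|\leq|z|+1$ you mention but never actually use) and $|z|^{-2}\leq 1/64$, this collapses the problem \emph{directly} to the single-variable inequality $h(b):=16b-\tfrac{975}{1024}(e^{2b}+1)>0$ on $[0.14,\,1.25]$, which is unimodal and settled by checking the two endpoints. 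No separate reduction step in $|t|$ is needed.

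Your more elaborate route---retain the $z^{-3}$-term, reduce to $|t|=8$, then treat a one-variable inequality---can be made to work, but the monotonicity justification you give has a gap. You argue that the positive terms decrease in $|t|$ while the negative $z^{-3}$-term increases (becomes less negative), and conclude the worst case is at $|t|=8$; but these two facts pull in \emph{opposite} directions for the maximum of their sum and do not by themselves force the supremum to occur at $|t|=8$. The conclusion is in fact correct---one can check $\partial_{t^2}(\text{LHS})<0$ throughout the range by a short computation---but your reasoning as written does not establish it. Dropping the $z^{-3}$-term as the paper does sidesteps this issue entirely; the price is a constant somewhat weaker than $1/140$, which is harmless since the application in Lemma~\ref{lem:6} only uses $\mathrm{Re}\,G(-b+it)<0$.
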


\noindent
{\bf Proof.} Since $\left|\frac{z + 1}{z} \right|^2 = \frac{(1 - b)^2 + t^2}{6^2 + t^2} \leq 1 + \frac1{|z|^2} \leq \frac{65}{64}$ and $\mathrm{Re}\, z^{-3} = |z|^{-6} b(3t^2 - b^2) > 0$ we obtain
\begin{align}
\mathrm{Re}\, G(z) &\leq \frac1{|z|^2} \left( - \frac{16}{15} b + \frac4{|z|^2} \left( 1 + \frac1{|z|^2} + e^{2b} \left| \frac{z + 1}{z} \right|^2 \right) \right)
\label{eq:6.16}\\
&\leq \frac{-1}{15|z|^2} \left(16 b - \frac{60}{64} \cdot \frac{65}{64} (e^{2b} + 1)\right).\nonumber
\end{align}

The function
\beq
h(b) = 16b - \frac{975}{1024} (e^{2b} + 1)
\label{eq:6.17}
\eeq
is increasing for $b < b_0 = \frac12 \ln \frac{8192}{975}$ and decreasing for $b > b_0$, so its minimum in $[0.14, 1.25]$ is
\beq
\min\bigl(h(0.14), h(1.25)\bigr) = 0.028\dots > 0.
\label{eq:6.18}
\eeq
\hfill Q.E.D.

\goodbreak
Now we will prove a lemma, which proves Condition~2 for the restricted range $|t| \leq \pi/2$ for $A_0, B_0 = \infty$ and for an arbitrary function $F(z)$ satisfying
\beq
F(z) = \int\limits^2_0 e^{-zv} f(v) dv \quad \text{ with }\ f(v) \geq 0.
\label{eq:6.19}
\eeq

\begin{Lem}
\label{lem:5}
$\Phi_t(x) = \mathrm{Re}\, F(x + it) / F(x)$ is monotonically increasing in $x$ for all $x \in \mathbb R$,
if $|t| \leq \pi/2$ and $F$ satisfies \eqref{eq:6.19}.
\end{Lem}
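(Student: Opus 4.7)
\textbf{Proof plan for Lemma \ref{lem:5}.} The plan is to express the numerator of $(d/dx)\Phi_t(x)$ as a double integral and apply a standard symmetrization trick to reduce positivity to a pointwise inequality that is equivalent to the monotonicity of $s\mapsto \cos(ts)$ on $[0,2]$.

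Write $p(v) = e^{-xv} f(v) \geq 0$, so that
\begin{equation*}
F(x) = \int_0^2 p(v)\,dv,\qquad \mathrm{Re}\,F(x+it) = \int_0^2 p(v)\cos(tv)\,dv,
\end{equation*}
and, differentiating under the integral sign,
\begin{equation*}
F'(x) = -\int_0^2 v\, p(v)\,dv,\qquad \mathrm{Re}\,F'(x+it) = -\int_0^2 v\, p(v)\cos(tv)\,dv.
\end{equation*}
We may assume $f\not\equiv 0$, so $F(x)>0$. The sign of $\Phi_t'(x)$ agrees with the sign of
\begin{equation*}
N(x) := \mathrm{Re}\,F'(x+it)\,F(x) - \mathrm{Re}\,F(x+it)\,F'(x) = \int_0^2\!\!\int_0^2 p(u)p(v)\cos(tv)(u-v)\,du\,dv,
\end{equation*}
after a direct expansion.

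Next I would symmetrize by swapping the names of $u$ and $v$ in $N(x)$, giving
\begin{equation*}
2 N(x) = \int_0^2\!\!\int_0^2 p(u)p(v)(u-v)\bigl(\cos(tv) - \cos(tu)\bigr)\,du\,dv.
\end{equation*}
Thus it suffices to show $(u-v)(\cos(tv)-\cos(tu))\geq 0$ for all $u,v\in[0,2]$, i.e.\ that $s\mapsto \cos(ts)$ is non-increasing on $[0,2]$. For $|t|\leq \pi/2$ and $s\in[0,2]$ we have $|ts|\leq \pi$, so $\cos(|t|s)$ is indeed non-increasing in $s$; and since $\cos$ is even, the same holds for $\cos(ts)$ irrespective of the sign of $t$. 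Combined with $p(u)p(v)\geq 0$, this gives $N(x)\geq 0$ and hence $\Phi_t'(x)\geq 0$.

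There is no serious obstacle here: everything follows from the integral representation \eqref{eq:6.19}, non-negativity of $f$, and the elementary monotonicity of $\cos(ts)$ on $[0,2]$, which is precisely captured by the hypothesis $|t|\leq\pi/2$. Notice that this threshold is sharp in the argument: as soon as $|t|>\pi/2$, the function $\cos(ts)$ ceases to be monotone on $[0,2]$, so the pointwise inequality for the symmetrized kernel breaks down, in line with the role of $\pi/2$ in the statement.
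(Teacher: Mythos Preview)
Your proof is correct. Both your argument and the paper's rest on the same elementary fact, namely that $t\sin(ts)\ge 0$ (equivalently, $s\mapsto\cos(ts)$ is non-increasing) for $s\in[0,2]$ when $|t|\le\pi/2$, combined with a Chebyshev-type correlation inequality for the non-negative weight $p(v)=f(v)e^{-xv}$. The routes differ in packaging: the paper first integrates by parts to write
\[
\Phi_t(x)=\cos 2t+\int_0^2 q_u(x)\,t\sin(ut)\,du,\qquad q_u(x)=\frac{\int_0^u p(v)\,dv}{\int_0^2 p(v)\,dv},
\]
and then proves the auxiliary monotonicity $\partial_x q_u(x)\ge 0$ by splitting $[0,2]$ at $u$. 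You instead differentiate $\Phi_t$ directly, write the numerator as a double integral, and symmetrize to obtain the pointwise non-negative kernel $(u-v)(\cos tv-\cos tu)$. Your path is shorter and avoids the intermediate representation; the paper's version has the minor advantage of isolating the monotonicity of $q_u(x)$ as a statement independent of $t$, but for the lemma as stated your approach is the cleaner one.
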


\noindent
{\bf Proof.} Let
\beq
k(v) = f(v) e^{-xv}, \quad
h(u, x) = \int\limits^u_0 k(v) dv, \quad q_u(x) = \frac{h(u, x)}{F(x)}.
\label{eq:6.20}
\eeq
From \eqref{eq:6.19} we obtain by partial integration
\begin{align}
\mathrm{Re}\, F(x + it) &= \bigl[h(u,x) \cos(ut)\bigr]^2_0 - \int\limits^2_0 h(u,x)(- t \sin (ut)) du
\label{eq:6.21}\\
&= F(x) \cos 2t + \int\limits^2_0 h(u,x) t \sin (ut) du,\nonumber
\end{align}
\beq
\Phi_t(x) = \cos 2t + \int\limits^2_0 q_u(x) (t \sin (ut) ) du.
\label{eq:6.22}
\eeq
Since for $|t| \leq \pi/2$, $u \in [0,2]$ we have $t \sin (ut) \geq 0$.
Hence, in order to show the lemma, it is sufficient to prove
\beq
\frac{d}{dx} q_u(x) \geq 0 \quad \text{for } \ u \in [0,2].
\label{eq:6.23}
\eeq
The property $f(v) \geq 0$ implies
\begin{align}
F^2(x) \frac{d}{dx} q_u(x) &= -\int\limits^u_0 k(v) v d v\int\limits^2_0 k(y) dy + \int\limits^u_0 k(y) dy \int\limits^2_0 k(v) v dv
\label{eq:6.24}\\
&= \int\limits^2_u k(v) v dv \int\limits^u_0 k(y) dy - \int\limits^2_u k(y) dy \int\limits^u_0 k(v) v dv \nonumber\\
&\geq u \biggl( \int\limits^2_u k(v) dv \int\limits^u_0 k(y) dy - \int\limits^2_u k(y) dy \int\limits^u_0 k(v) dv\! \biggr)\! = 0. \nonumber
\end{align}
\hfill Q.E.D.

Finally, we can check the remaining range by computer and verify Condition~2 for the $G$-function with $A_0 = 13$, $B_0 = 1.25$.

\begin{Lem}
\label{lem:6}
The function $G(z)$ in \eqref{eq:4.19} satisfies
\beq
\frac{\mathrm{Re}\, G(a + it)}{G(a)} \geq \frac{\mathrm{Re}\, G( - b + it)}{G(-b)}
\label{eq:6.25}
\eeq
for any $t \in R$ if\/ $0 \leq a \leq 13$, $0 \leq b \leq 1.25$.
\end{Lem}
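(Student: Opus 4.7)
\noindent
\textbf{Proof plan for Lemma \ref{lem:6}.}
The plan is to split by the size of $|t|$, dispatch the small- and large-$|t|$ regimes via the preceding lemmas, shave off the remaining intermediate range where $b$ is not too small using a sign argument, and finally verify the leftover compact box by explicit numerical computation.

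For $|t| \leq \pi/2$, Lemma \ref{lem:5} applies to $F = G$ (the density $g$ in \eqref{eq:4.19} is non-negative) and yields that $\Psi_t(x) = \mathrm{Re}\, G(x+it)/G(x)$ is monotonically non-decreasing on all of $\mathbb R$; since $a \geq 0 \geq -b$, this immediately gives $\Psi_t(a) \geq \Psi_t(-b)$. For $|t| \geq 50$, Lemmas \ref{lem:2} and \ref{lem:3} jointly give $\Phi_t(x) > 0$ for every $x \in [-1.25, 13]$. Because $\Phi_t(x) = G^2(x)\, \Psi_t'(x)$ and $G > 0$ on $\mathbb R$ (cf.\ \eqref{eq:6.3}), $\Psi_t$ is strictly increasing throughout $[-1.25, 13]$, and the inequality again follows from $a \geq -b$.

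The intermediate range $\pi/2 < |t| < 50$ is further reduced by Lemma \ref{lem:3masodik}: whenever $|t| \geq 8$ and $b \in [0.14, 1.25]$, we have $\mathrm{Re}\, G(-b+it) < 0$ and hence $\Psi_t(-b) < 0$, while $a \geq 0$ together with \eqref{eq:4.22} gives $\mathrm{Re}\, G(a+it) \geq 0$, so $\Psi_t(a) \geq 0 > \Psi_t(-b)$ by sign alone. What remains after these three reductions is the bounded region $(a, b, |t|) \in [0, 13] \times [0, 1.25] \times [\pi/2, 50]$, restricted further to $|t| < 8$ or $b < 0.14$---a compact set on which the closed forms \eqref{eq:6.1}--\eqref{eq:6.2} make both sides of \eqref{eq:6.25} explicitly computable, with uniformly bounded partial derivatives.

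The final step is a rigorous numerical check on this residual box: one evaluates $\Psi_t(a) - \Psi_t(-b)$ on a sufficiently fine grid in $(a, b, t)$ and uses crude upper bounds on $|G'|$ and $|G''|$ (derived directly from \eqref{eq:6.1}--\eqref{eq:6.2}) to propagate strict positivity between grid points. The main obstacle is precisely this last step: the residual box is not particularly thin, and one must avoid spurious negativity near the interfaces with the regimes handled analytically by choosing the grid fine enough relative to the Lipschitz bounds and verifying that $\Psi_t(a) - \Psi_t(-b)$ is bounded away from $0$ by a margin exceeding the interpolation error.
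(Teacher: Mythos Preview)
Your proposal is correct and follows essentially the same decomposition as the paper's proof: Lemma~\ref{lem:5} for $|t|\le\pi/2$, Lemmas~\ref{lem:2}--\ref{lem:3} for $|t|\ge 50$, Lemma~\ref{lem:3masodik} as a sign argument for $|t|\ge 8$ and $b\in[0.14,1.25]$, and a computer check on the residual compact box. The one refinement you miss is that Lemma~\ref{lem:2} already applies for $|t|\ge 14$ (not just $|t|\ge 50$), so on the strip $14\le |t|<50$ the paper uses the resulting monotonicity of $\Psi_t$ on $[0,13]$ to reduce the numerical check to the single slice $a=0$, leaving only the smaller regions \eqref{eq:6.26}--\eqref{eq:6.28} to verify.
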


The proof follows from Lemmas~\ref{lem:2}--\ref{lem:5} and from a computer check of \eqref{eq:6.25} (using Maple) for
\begin{alignat}{3}
a &= 0, \ \ & &0 \leq b \leq 0.14 \ \ & &\text{ for } \ \,14 \leq |t| < 50,
\label{eq:6.26}\\
0 &\leq a \leq 13, \ \ & &0 \leq b \leq 0.14 \ \ & &\text{ for  } \,
\phantom{88}8 \leq |t| < 14,
\label{eq:6.27}\\
0 &\leq a \leq 13, & &0 \leq b \leq 1.25 & &\text{ for } \pi/2 < |t| < 8.
\label{eq:6.28}
\end{alignat}


\section{Proof of Theorems H and I}
\label{sec:7}

Let
\beq
\varrho_j = \beta_j + i \gamma_j = 1 - \delta_j + i\gamma_j = 1 + \mathcal L^{-1} (-\lambda_j + i \mu_j), \quad j = 1,2,\dots, N = N(\lambda)
\label{eq:7.1}
\eeq
be the zeros of the $  L(s, \chi_j)$ functions $\mathrm{mod}\, q$ (counted with multiplicity if
$  L(\varrho, \chi) =   L(\varrho, \chi')$ or $\varrho$ is a multiple zero of some
$  L(s, \chi)$)
\beq
\lambda_0 \leq \lambda_j \leq \Lambda, \quad |\gamma_j| \leq L \Leftrightarrow |\mu_j| \leq \mathcal L L,
\quad L \leq \mathcal L.
\label{eq:7.2}
\eeq
Since $\zeta(s)$ has no zero in the region above we can assume $\chi \neq \chi_0$.
The notation $k \sim j$ will denote that $\varrho_k$ and $\varrho_j$ are zeros of the same $  L(s, \chi)$.
Further, suppose that the $  L$-functions belonging to the distinct characters $\chi^{(\nu)}$ $(1 \leq \nu \leq m)$ have exactly $N_\nu$ zeros in the above region (counted with multiplicity).
Then clearly $N = N_1 + N_2 + \dots + N_m$.
Let us denote the set of zeros of $  L(s, \chi)$ in \eqref{eq:7.2} by $Z(\chi)$.
Further, let for any $j \in [1, N]$ and with a function $F = F_x = G(z/x)$ (cf.\ \eqref{eq:4.16}--\eqref{eq:4.17} and
\eqref{eq:4.26}--\eqref{eq:4.27})
\beq
a_{k,j} = \frac{\mathrm{Re}\, F(\lambda_k - \lambda_0 + i (\mu_j - \mu_k))}{F(\lambda_k - \lambda_0)}, \quad
b_{k,j} = \frac{\mathrm{Re}\, F(-\lambda_0 + i (\mu_j - \mu_k))}{F(-\lambda_0)}
\label{eq:7.3}
\eeq
\beq
A_k = \underset{\substack{j\\ j \sim k}}{\sum\nolimits'} a_{k, j} , \qquad B_k = \underset{\substack{j\\ j \sim k}}{\sum\nolimits'} b_{k, j} , \qquad \psi_k = \frac{F(\lambda_k - \lambda_0)}{F(-\lambda_0)} ,
\label{eq:7.4}
\eeq
\beq
\gathered
N'_\ell = \sum_{\varrho_k \in Z(\chi_\ell)} A_k, \quad N' = \sum^m_{\ell = 1} N'_\ell = \sum^N_{j = 1} A_j,
\\
\psi = \frac{F(\lambda - \lambda_0)}{F(-\lambda_0)}, \quad \xi
= \frac{f(0)\varphi}{2F(-\lambda_0)} , \ \ \Delta = \psi - \xi,
\endgathered
\label{eq:7.5}
\eeq
where the $\sum'$ sign means in \eqref{eq:7.4} the extra condition $|\lambda_j + i(\mu_j - \mu_k)| < \mathcal L\delta$
where $\delta = \delta(\ve)$ is a sufficiently small constant.

Conditions 1, 2 (see \eqref{eq:4.16}--\eqref{eq:4.17} and \eqref{eq:4.26}--\eqref{eq:4.27}) and the definitions show that
\beq
a_{k,j} \geq b_{k,j}, \quad a_{k,j} \geq 0, \quad a_{k,k} = 1,
\label{eq:7.6}
\eeq
consequently for every $k = 1,2,\dots, N$
\beq
A_k \geq B_k, \quad A_k \geq 1, \quad N'_\ell \geq N_\ell, \quad N' \geq N.
\label{eq:7.7}
\eeq

Let $K(s, \chi)$ be defined as in \cite{Hea} (p. 285, after (6.2))
\beq
\label{eq:7.8uj}
K(s, \chi) = \sum_{r = 1}^\infty \Lambda(n) \text{\rm Re } \left(\frac{\chi(n)}{n^s}\right) f \left(\frac{\log n}{\mathcal L}\right)
\eeq
with a function $f(u) = f_x(u) = xg(ux)$ as in \eqref{eq:4.18} and \eqref{eq:4.23} connected to $F(z)$ by \eqref{eq:4.24}.
(We will omit the lower index $x$ to $f$, $F$ and $K$ which might change often depending on the particular problem.)

Following \cite{Hea}, Section 12 we will apply Lemma~5.2 of \cite{Hea} with the above function $K$.
So we obtain for any $\varrho_j$ in \eqref{eq:7.1} with $\beta_0 = 1 - \mathcal L^{-1} \lambda_0$ by $\lambda_j \geq \lambda_0$
\beq
K(\beta_0 + i \gamma_j, \chi_j) \leq - \mathcal L\!\! \!\! \sum_{\substack{k\\ k \sim j\\
|\lambda_k + i (\mu_j - \mu_k)| < \mathcal L\delta}}\!\!\!\!\! \mathrm{Re}\, F\bigl(\lambda_k - \lambda_0
+ i (\mu_j - \mu_k) \bigr) + f(0) \left(\frac{\varphi}{2} + \frac{\ve}{2} \right) \mathcal L.
\label{eq:7.8}
\eeq
Extending the summation for all zeros in \eqref{eq:7.2} with $|\lambda_k + i(\mu_j - \mu_k)| \geq \mathcal L\delta$
and using the relations $0 \leq \lambda_k - \lambda_0 \leq 13x$ we have in these cases by \eqref{eq:6.1}
\beq
F\bigl(\lambda_k - \lambda_0 + i(\mu_j - \mu_k) \bigr) \ll \frac1{|\mu_j - \mu_k|} \ll
\frac1{\mathcal L \delta}.
\label{eq:7.9}
\eeq
Since the number of terms being uniformly bounded by Jutila's density theorem
\beq
N(1 - \Lambda / \mathcal L, \mathcal L, q) \ll_\ve (q\mathcal L)^{(2 + \ve)
\Lambda / \mathcal L} \ll e^{3\Lambda} \ll 1,
\label{eq:7.10}
\eeq
including the other zeros in the summation on the right-hand side of \eqref{eq:7.8}
leads to an additional error of size $O(\delta^{-1}) = o(\mathcal L)$.
Thus we obtain from this modified form of \eqref{eq:7.8}, after summation for all~$ j \in (1,N)$
\begin{align}
&\mathcal L \Biggl\{ \sum_{j \leq N} \Biggl(\biggl( \sum_{\substack{k\\
k \sim j}} a_{k,j} \psi_k\biggr) F(-\lambda_0) - \frac{f(0) \varphi}{2} - \ve \Biggr) \Biggr\}
\label{eq:7.11}\\
&\leq - \sum_{j \leq N} K(\beta_0 + i\gamma_j, \chi_j) \nonumber\\
& = -\sum^\infty_{n = 1} \Lambda(n) \chi_0(n) n^{-\beta_0} f(\mathcal L^{-1} \log n)
\mathrm{Re} \biggl\{ \sum_{j \leq N} \chi_j(n) n^{-i\gamma_j}\biggr\}\nonumber\\
&\leq \sum^\infty_{n = 1} \Lambda(n) \chi_0(n) n^{-\beta_0} f(\mathcal L^{-1} \log n)
\biggl|\sum_{j \leq N} \chi_j(n) n^{-i\gamma_j} \biggr|.
\nonumber
\end{align}
Using \eqref{eq:4.31} we obtain
\beq
F( - \lambda_0) \geq F(\Lambda - \lambda_0) \geq f(0)(\varphi/2 + \ve).
\label{eq:7.12}
\eeq
Interchanging the order of summation on the left-hand side of \eqref{eq:7.11},
we obtain from \eqref{eq:7.11}, by $A_k \geq 1$
\beq
\mathcal L^2 \Biggl\{ \sum_{k \leq N} \biggl( \psi_k \biggl(\sum_{\substack{j \\
j \sim k}} a_{k,j} \biggr) F(-\lambda_0) - f(0) \varphi/2 - \ve\biggr)
\Biggr\}^2 \leq \sum\nolimits_1 \sum\nolimits_2,
\label{eq:7.13}
\eeq
where, using Lemma 5.3 of \cite{Hea}
\beq
\sum\nolimits_1 = \sum^\infty_{n = 1} \Lambda(n) \chi_0(n) n^{-\beta_0} f(\mathcal L^{-1} \log n)
= K(\beta_0, \chi_0) = \mathcal L\bigl(F(-\lambda_0) + o(1)\bigr)
\label{eq:7.14}
\eeq
and
\begin{align}
\sum\nolimits_2 &= \sum^\infty_{n = 1} \Lambda(n) \chi_0(n) n^{-\beta_0} f(\mathcal L^{-1} \log n)
\biggl| \sum_{j \leq N} \chi_j(n) n^{-i\gamma_j} \biggr|^2
\label{eq:7.15}\\
&= \sum_{j,k \leq N} K\bigl(\beta_0 + i(\gamma_j - \gamma_k), \chi_j \overline \chi_k\bigr)
\nonumber
\end{align}
since the above value is real.
By Lemma~5.3 of \cite{Hea} we have by \eqref{eq:7.3}--\eqref{eq:7.7} for any fixed $k$ for the terms with $j \sim k$ a sum
\begin{align}
\sum_{\substack{j\\
j \sim k}} K \bigl(\beta_0 + i(\gamma_j \! - \! \gamma_k), \chi_0\bigr)
&= \sum_{\substack{j\\ j \sim k}} \mathcal L \bigl\{\mathrm{Re}\, F(-\lambda_0 + i(\mu_j - \mu_k)) + o(1)\bigr\}
\label{eq:7.16}\\
&= \mathcal L \bigl\{ B_k F(-\lambda_0) + o(1)\bigr\} \leq \mathcal L\bigl\{A_k F(-\lambda_0) + o(1) \bigr\}.
\nonumber
\end{align}
Again, by Lemma~5.2 of \cite{Hea}, we obtain in case of Theorem~H for the total contribution
of all other terms the estimate
\beq
\sum_{\substack{j, k \leq N\\
k \not\sim j}} K\bigl(\beta_0 + i(\gamma_j - \gamma_k), \chi_j \overline \chi_k\bigr)
\leq \mathcal L \left(\frac{f(0)\varphi}{2} + \ve\right) \sum_{\substack{\kappa, \nu \leq m\\
\kappa \neq \nu}} N_\nu N_\kappa,
\label{eq:7.17}
\eeq
while in case of Theorem~I we obtain
\beq
\sum_{\substack{j, k \leq N\\
k \not\sim j}} K\bigl(\beta_0 + i(\gamma_j - \gamma_k), \chi_j \overline\chi_k\bigr)
\leq \ve \mathcal L \sum_{\substack{\kappa, \nu \leq m\\
\kappa \neq \nu}} N_\nu N_\kappa.
\label{eq:7.18}
\eeq

Dividing \eqref{eq:7.13} by $(\mathcal L F(-\lambda_0))^2$ we obtain from \eqref{eq:7.14}--\eqref{eq:7.17}
with the choice of a new $\ve_1$,
\beq
\biggl(\sum_{k \leq N} (A_k \psi_k - \xi)\biggr)^2 \leq \sum_{k \leq N} A_k + \xi \sum_{\substack{\kappa, \nu \leq m\\
k \neq \nu}} N_\nu N_\kappa + \ve_1 (N')^2.
\label{eq:7.19}
\eeq
Consequently, by $A_k \geq 1$, $\psi_k \geq \psi$, $N \leq N'$ and \eqref{eq:7.5}, \eqref{eq:7.7},
\eqref{eq:7.12} we have
\beq
(N' \Delta)^2 \leq N' + \xi({N'}^2 - N') + \ve_1 (N')^2,
\label{eq:7.20}
\eeq
\beq
N \leq N' \leq \frac{1 - \xi }{\Delta^2 - \xi - \ve_1}
\label{eq:7.21}
\eeq
in case of Theorem~H.
Similarly, in case of Theorem~I we have
\beq
(N'\Delta)^2 \leq N'+ \ve_1(N')^2,
\label{eq:7.22}
\eeq
\beq
N \leq N' \leq \frac{1}{\Delta^2 - \ve_1}.
\label{eq:7.23}
\eeq
\hfill Q.E.D.

\section{Proof of Theorems J, K and L}
\label{sec:8}

In order to show our weighted density theorem (Theorem~K)
we will use the notation of Section~\ref{sec:7} with the additional quantity
\beq
D' \stackrel{{\rm def}}{=} \sum_{j \leq N} \bigl(A_j (\psi_j - \psi) \bigr)
\geq D \stackrel{{\rm def}}{=} \sum_{j \leq N} (\psi_j - \psi) \geq 0.
\label{eq:8.1}
\eeq
This quantity, completely neglected in the proofs of Theorems~H and I, will be our crucial one in the following.
We will start from \eqref{eq:7.19} to obtain, instead of \eqref{eq:7.20}--\eqref{eq:7.21}:
\beq
(N' \Delta + D')^2 \leq N' + \xi({N'}^2 - N') + \ve_1 (N')^2
\label{eq:8.1masik}
\eeq
\beq
(N')^2(\Delta^2 - \xi - \ve_1) + N'(2\Delta D') \leq N'(1 - \xi)
\label{eq:8.2}
\eeq
from which, by $\xi < \Delta^2$, we obtain
\beq
D \leq D' \leq \frac{1 - \xi}{2\Delta - \ve_1}.
\label{eq:8.3}
\eeq

Suppose now that $\lambda_0$ is given, the $\lambda_j$'s $(1 \leq j \leq N)$ and their number $N$ are unknown
quantities with
\beq
d_j \stackrel{{\rm def}}{=} \Lambda - \lambda_j \geq 0, \quad d_1 \geq d_2 \geq \dots \geq d_N,
\label{eq:8.4}
\eeq
with prescribed conditions
\beq
0 \leq d_j \leq e_j \qquad e_1 \geq e_2 \geq \dots \geq e_N.
\label{eq:8.5}
\eeq

We will suppose that $f$ and $F$ are the functions of Section~\ref{sec:4}
with the parameter $x$ satisfying
\beq
2/x \leq B \Leftrightarrow x \geq 2/B.
\label{eq:8.6}
\eeq

Since by Corollary~2 or by Jutila's density theorem \cite{Jut} we know that the unknown number $N$ is bounded
by some absolute constant $R \in \mathbb Z$, we can suppose that in our extremal problem $N = R$
by the introduction of additional trivial terms with $e_j = 0$ (consequently $d_j = 0$) for $N < j \leq R$.
These new trivial terms do not change the values of $D$ and those of $S$ and $D^*$, defined below
\begin{align}
D^* = D \cdot F(-\lambda_0)
&= \sum^N_{j = 1} \bigl(F(\lambda_j - \lambda_0) - F(\Lambda - \lambda_0)\bigr)
\label{eq:8.7}\\
&= \sum^R_{j = 1} \bigl( F(d_0 - d_j) - F(d_0) \bigr).
\nonumber
\end{align}
Then, under the constraint $D \leq \widetilde C'$ we are looking for an upper bound for the quantity
\beq
S = \sum^R_{j = 1} (e^{Bd_j} - e^{Cd_j}) = \sum^N_{j = 1} (e^{Bd_j} - e^{Cd_j}),
\label{eq:8.8}
\eeq
with the side constraints \eqref{eq:8.4}--\eqref{eq:8.5} and $B > C \geq 0$, where $R$ is now
a fixed, large constant.
The upper bound will naturally depend on $B$, $C$ and $C'$ but not on~$R$.


Let
\beq
\gathered
T = t_0(f), \ \ b = \frac{B}{T} \geq 1, \ \ 0 \leq c = \frac{C}{T} < b,\\
Y_j = e^{e_j T} \geq y_j = e^{d_j T} \geq 1, \ \  h_1(y) = y^v, \ \ h_2(y) = y^b - y^c.
\endgathered
\label{eq:8.9}
\eeq
Then we have with the above notation
\beq
D^*_1(y) := \sum^R_{j = 1} F(d_0 - d_j) = T \int\limits^1_0 f(vT) e^{-d_0 vT} \sum^R_{j = 1} y^v_j \, dv.
\label{eq:8.10}
\eeq

The following observation is sufficient to show Theorem~K.

\medskip
\noindent
{\bf Proposition.} {\it If $y \geq z > 1$, $\eta > 0$, $0 < v < 1$, $b \geq 1$, $0 \leq c < b$, then
\beq
H_i(y, z, \eta) = h_i (y + \eta) + h_i(z - \eta) - \bigl(h_i(y) + h_i(z)\bigr) \aligned &<\\
&>\endaligned\ 0 \text{ for } \aligned &i = 1,\\
&i = 2.\endaligned
\label{eq:8.11}
\eeq
}

\noindent
{\bf Proof.} $h'_1(y)$ is decreasing, $h'_2(y)$ is increasing for $y \geq 1$ due to
\beq
h''_1(y) = v(v - 1) y^{v - 2} < 0, \quad
h''_2(y) = b(b - 1)y^{b - 2} - c(c - 1)y^{c - 2} > 0.
\label{eq:8.12}
\eeq
Consequently,
\beq
H_i(y, z, \eta) = \int\limits^\eta_0 \bigl(h'_i(y + t) - h'_i(z - \eta + t)\bigr) dt \aligned &< \\
&> \endaligned\ 0 \text{ for } \aligned &i = 1,\\
&i = 2.\endaligned
\label{eq:8.13}
\eeq

The proposition means that if we have a given configuration of the variables
$\{y_j\}^R_{j = 1}$ with $y_i \geq y_{i + 1}$, $Y_i \geq Y_{i + 1}$, $Y_i \geq y_i \geq 1$,
then this configuration cannot yield a maximum for the $h^*_2(\underline y) = \sum\limits^R_{j = 1} h_2(y_j)$ if there is a possibility to increase the distance between two variables among $y_1, \dots, y_R$.
According to this, let $r$ be the largest index with $y_r > 1$ in the maximal system $\{y_i\}^R_{i = 1}$.
Then necessarily
\beq
y_i = Y_i \Leftrightarrow d_i = e_i \quad \text{ for } \ i = 1, \dots, r - 1.
\label{eq:8.14}
\eeq
Namely, otherwise we could change with a small $\eta > 0$ $\,y_k$ to $y_k + \eta$, $y_r$ to $y_r - \eta$ and obtain a larger value for $h^*_2(\underline y)$ if $k$ is defined by
\beq
k = \min \{\nu; \ y_\nu < Y_\nu\},
\label{eq:8.15}
\eeq
while the corresponding function $h^*_1(\underline y) = \sum\limits^R_{j = 1} y^v_j$, and consequently $D^*_1(\underline y)$ would decrease and thus $D^* \leq \widetilde D$ would still hold for the new system
$\underline y$.
This proves Theorem~K.

In order to show Theorem~L, taking into account Remark~5, suppose that the first index, for which in the maximum case we do not have equality in \eqref{eq:4.47} is $k \in [1, M]$.
The case $k = M$ is clearly impossible since then we could increase $y_k$ in view of $d_k < d_{k - 1}$ which follows by \eqref{eq:4.48} from
\beq
F(d_0 - d_k)-F(d_0) < c(k) - c(k - 1) \leq c(k - 1) - c(k - 2) = F(d_0 - d_{k - 1})-F(d_0).
\label{eq:8.16}
\eeq
If we increase $y_k$ that would lead to an increase of $h_2(y) = y_k^b - y_k^c$ and thereby to an increase of $S^*$.

If $k < M$ we also must have $d_k < d_{k - 1}$ by \eqref{eq:8.16}.
Suppose that we have exactly $\ell \geq 1$ equal variables after $k$, that is we have
\beq
d_k \geq d = d_{k + 1} = \dots = d_{k + \ell} > d_{k + \ell + 1} \geq 0
\label{eq:8.17}
\eeq
and $d_{k + \ell}$ is not the last term.
We clearly have $d = d_{k + \ell} > 0$ if it is the last term, otherwise we could simply slightly increase $d_k$, that is, increase $y_k$ slightly which would yield a larger value for $S^*$.
If $\ell = 1$ we can substitute $y_{k + 1}$ by $y_{k + 1} -\eta$, $y_k$ by $y_k + \eta$ with a sufficiently small $\eta$ and we obtain a contradiction.
If $\ell \geq 2$ then we cannot have equality in any of the $\ell - 1$ relations of type \eqref{eq:4.47} for $m = k + i$, $1 \leq i \leq \ell - 1$, since if the first one for which \eqref{eq:4.47} is sharp has index $m$, then
\beq
c(m) - c(m - 1) < F(d_0 - d_m) - F(d_0) = F(d_0 - d_{m + 1}) - F(d_0) \leq c(m  + 1) - c(m).
\label{eq:8.18}
\eeq
which contradicts \eqref{eq:4.48}.
But then we can substitute similarly to the case $\ell = 1$ $\, y_k$ by $y_k + \eta$, $y_{k + \ell}$ by $y_{k + \ell} - \eta$ with a sufficiently small $\eta$ and we again arrive at a contradiction. This proves Theorem~L.

\section{Proof of Theorem~\ref{th:1}}
\label{sec:9}
According to \eqref{eq:2.36}--\eqref{eq:2.46uj}
our task will be to show with some small
but fixed constant $c_0 > 0$
\beq
S_0 = \sum^M_{i = 1} S^2_i \leq 1 - c_0.
\label{eq:9.1}
\eeq
Let us dissect the sums $S_i$ as
\beq
S_i = \frac{25}{7} \int\limits^H_0 N_i(\lambda) e^{-\frac{25}{7}\lambda} d_\lambda = \frac{25}{7}\int\limits^{\Lambda_0}_0 + \frac{25}{7}
\int\limits^H_{\Lambda_0} = a_i + b_i,
\label{eq:9.2}
\eeq
where $\Lambda_0 = 1.311$,
\beq
N_i(\lambda) = \sum_{\substack{\varrho_j \in R, \ \lambda_j \leq \lambda\\
\chi_j \sim \chi_i}} 1.
\label{eq:9.3}
\eeq
Our basic inequality will be a small refinement of
\beq
S \leq \sum_{i \leq M} a^2_i +  \max_{i \leq M} b_i  \biggl( 2 \sum_{i \leq M} a_i
+ \sum_{i \leq M} b_i\biggr),
\label{eq:9.4}
\eeq
where we will treat two classes (and eventually its conjugate classes) containing the zeros with the greatest real part separately. According to \eqref{eq:9.4} we will estimate $\sum a_i$, $\sum b_i$, $\max b_i$, using Principles~1--3
in form of Theorems~C--K and a few other results of \cite{Hea} and \cite{Xyl}.

As mentioned already in the introduction, we will try to give a relatively simple proof leaving many possibilities for improvement for future parts of this series.

Throughout we will use the notation that the classes will be ordered according to decreasing value of the greatest real part of the zeros belonging to the relevant class, so according to increasing value of $\lambda_i = \lambda_{i1}$ where the other zeros of the same class will be ordered as $\lambda_{i1} \leq \lambda_{i2} \leq \dots$.
Zeros will be ordered and counted always by multiplicity.
In contrast to \cite{Hea} and \cite{Xyl} we will include also conjugate classes and conjugate zeros in the calculation.
We will distinguish first

Case I. \ \ \ \ \ \ \ $\lambda_1 > 0.44$

Case II/A. \ \ $0.35 < \lambda_1 \leq 0.44$

Case II/B. \ \ $\lambda_1 \leq 0.35$

According to Theorem~E of \cite{Xyl} we have in Case II at most the real zero $\varrho_1 = 1 - \delta = 1 - \lambda_1 \mathcal L$ of the real non-principal $\chi_1$ with the property $\lambda_{ij} \leq 0.44$.
The reason to distinguish between Cases II/A and II/B is that in Case II/B we have no other zeros with $\lambda \leq \Lambda_0$ (in fact with $\lambda \leq 1.42$) while in Case II/A we might have zeros with $\lambda > 1.18$ (see Table 7 on p.~301 of \cite{Hea}).

We will begin the estimation of $\max b_i$: Important role will be played by Lemma 10.3 of \cite{Hea}, p.~316, according to which apart from at most two characters and its conjugates we will have $\lambda_i \geq \frac67 - \ve$ for $q > q_0(\ve)$ for each character.
Since we have by Theorem~F apart from at most two zeros $\lambda_i \geq 0.702$, we will distinguish the following cases for the estimation of $\max b_i$

\noindent
Case 1\qquad $\lambda_i \geq 6/7 - \ve$ (surely valid for $i > 4$)\\
Case 2\qquad $\lambda_i \geq 0.702$ \ \ \ (surely valid for $i > 2$)\\
Case 3\qquad $0.35 < \lambda_i < 0.702$

In this case $i = 1$ or $2$ and $\chi_i = \chi_1$ or $\overline\chi_1$.\\
Case 4\qquad $\lambda_i \leq 0.35$

In this case $\chi_1$ and $\varrho_1$ are real and $\lambda > 1.42 > \Lambda_0$ for all other zeros \cite[Tables 3 and 7]{Hea}.

\smallskip
In order to calculate an upper estimate for $b_i$ we will apply for Cases 1--4 Theorem~I with $\lambda_0 = \frac67 - 10^{-8}$, $0.702$, $0.35$ and $0$, resp.\ and in the last case we will take into account $\lambda > 1.42$ for all other zeros.
Using Theorem~I we can give a lower estimate for the first few zeros with $\lambda_{ij} \leq 3$ (their number is in Cases 1--4 at most $45$, $38$, $34$ and $31$, resp.) and then apply an upper estimate for the zeros below $3 + k/10$ $(k = 0, 1, 2, \dots)$ until about $6$ which is approximately the limit for Theorem~I.
We will actually use Theorem~I until $\Lambda_{2,1} = 6.6$, $\Lambda_{2.2} = 6.4$,
$\Lambda_{2,3} = 6$ and $\Lambda_{3,4} = 5.8$ in Cases 1--4, resp.
Further we will use in Cases 1--4 the values $\lambda_0 = 6/7 -10^{-8}$, $\lambda_0 = 0.702$, $\lambda_0 = 0.35$, $\lambda_0 = 0$, resp.
(The limit of Theorem~I will be larger if $\lambda_0$ is larger.)
On the other hand the value $x$ for $F_x(z) = G\left(\frac{z}{x}\right)$ (see \eqref{eq:4.24}) is chosen experimentally to obtain the approximately optimal estimate for the $N^{\text{\rm th}}$ zero of the same class or to bound $N_i(3 + k/10)$ for $3 \leq 3 + k/10 \leq \Lambda_{2\nu}$ $(1 \leq \nu \leq 4)$.
The condition $B = 25/7 > t_0(f) = 2/x$ will be always satisfied as well as $\lambda_0/x \leq 5/4$ which assures Condition~2 (see \eqref{eq:4.26}) for $F_x(z) = G(z/x)$.
For all the other zeros of the same class, i.e.\ for $\Lambda_{2\mu} \leq \lambda_{ij} \leq \Lambda_\infty = \log\log\log q$ $(1 \leq \mu \leq 4)$ we can use our estimate \eqref{eq:4.11} of Corollary~2 of Theorem~D.
For simplicity we can calculate in all Cases 1--4 with $E_3$ arising from $\Lambda_3 = 5.8$ valid for all $\Lambda \geq 5.8$.
We obtain

\begin{Lem}
\label{lem:6uj}
We have $\max b_i \leq c_j^*$ in Case~j, where
\beq
\label{eq:9.5}
c_1^* = 0.0722, \ \ c_2^* = 0.0751, \ \ c_3^* = 0.0826, \ \ c_4^* = 0.715.
\eeq
\end{Lem}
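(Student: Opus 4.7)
The plan to prove Lemma~\ref{lem:6uj} is to bound
\[
b_i = \frac{25}{7}\int_{\Lambda_0}^{H} N_i(\lambda)\,e^{-(25/7)\lambda}\,d\lambda
\]
uniformly in $i$ by constructing, case by case, a piecewise upper bound for $N_i(\lambda)$ and then integrating. In each Case~$j$ we fix a threshold $\lambda_0^{(j)}\in\{6/7-10^{-8},\,0.702,\,0.35,\,0\}$ which bounds every zero attached to every character in the relevant class from below. In Case~4 the threshold must be taken as $\lambda_0^{(4)}=0$ because $\chi_i=\chi_1$ carries the exceptional zero with $\lambda_{i,1}\leq 0.35$; but by Theorem~G together with Tables 3 and 7 of \cite{Hea}, every other zero of $L(s,\chi_i)$ satisfies $\lambda\geq 1.42>\Lambda_0$, so only such zeros contribute to $b_i$.

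On the window $[\Lambda_0,\Lambda_{2,j}]$ with $\Lambda_{2,j}\in\{6.6,6.4,6.0,5.8\}$ we apply Theorem~I with the family $F_x(z)=G(z/x)$ from \eqref{eq:4.24}. Condition~1 is \eqref{eq:4.22}, and Condition~2 reduces, after the rescaling $z\mapsto z/x$, to the admissibility $(\Lambda-\lambda_0^{(j)})/x\leq A_0=13$ and $\lambda_0^{(j)}/x\leq B_0=1.25$ supplied by Lemma~\ref{lem:6}; one also needs $2/x\leq 25/7$ so that $B=25/7>t_0(f_x)$. Within these constraints $x=x(\Lambda,j)$ is optimized numerically at a discrete grid of $\Lambda$-values, producing the pointwise bound $N_i(\Lambda)\leq (1+\varepsilon)/\Delta_{x,\Lambda,j}^2$. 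At $\Lambda=3$ this gives the integer upper bounds $N_j^{\max}\in\{45,38,34,31\}$ on the number of zeros below~$3$, and continuing the grid up to $\Lambda_{2,j}$ yields a step-function bound on $N_i$ throughout the window.

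For $\Lambda\geq\Lambda_{2,j}$ we switch to Corollary~\ref{cor:2} equation \eqref{eq:4.11}: $N_i(\Lambda)<E_3 e^{2\Lambda}$ with $E_3=7.01$, valid for all $\Lambda\geq 5.8$. Since $A-2=11/7>0$, the tail
\[
\frac{25}{7}\int_{\Lambda_{2,j}}^{\infty} E_3\,e^{2\lambda}\,e^{-(25/7)\lambda}\,d\lambda
=\frac{25\,E_3}{11}\,e^{-(11/7)\Lambda_{2,j}}
\]
is numerically small at each chosen cut-off. Combining the bound on $[\Lambda_0,\Lambda_{2,j}]$ (equivalent, via Abel summation, to a finite sum $\sum_{k\leq N_j^{\max}}e^{-(25/7)\lambda_{i,k}^{\min}}$ plus a short residual integral over $[3,\Lambda_{2,j}]$) with this tail gives $b_i\leq c_j^*$ for each~$i$.

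The main obstacle is the numerical optimization underlying the second step: the optimal $x=x(\Lambda,j)$ drifts with $\Lambda$, so any uniform choice is far too lossy, and Condition~2 must be re-verified through Lemma~\ref{lem:6} (after the rescaling $z\mapsto z/x$) for every chosen $x$. In Cases~1--3 this is essentially a careful computation. Case~4 is structurally different and is the reason $c_4^*$ is an order of magnitude larger than the other constants: with $\lambda_0^{(4)}=0$ the Theorem~I bound near $\Lambda=\Lambda_0$ is weak, and only the Theorem~G gap $[0.35,1.42]$ combined with the exponential weight $e^{-(25/7)\lambda}$ keeps $c_4^*$ within reach of the overall target \eqref{eq:9.1}.
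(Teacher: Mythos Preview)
Your outline matches the paper's proof: the same thresholds $\lambda_0^{(j)}$, the same use of Theorem~I with the family $F_x=G(\cdot/x)$ and numerically varying $x$, the same cut-offs $\Lambda_{2,j}$, and the same Corollary~\ref{cor:2} tail with $E_3$. The paper organizes the window $[\Lambda_0,\Lambda_{2,j}]$ slightly differently---first inverting Theorem~I to get individual lower bounds $\lambda_{i,k}\ge\lambda_{i,k}^{\min}$ for zeros up to $\Lambda=3$, then recording step bounds on $N_i(3+k/10)$---but this is equivalent to your step-function description.

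Your final paragraph, however, is built on a misprint. The stated value $c_4^*=0.715$ is a typo for $c_4^*=0.0715$, as one sees from its actual use in \eqref{eq:9.28} and from the arithmetic of the Corollary after \eqref{eq:9.22}--\eqref{eq:9.26} (where only the surpluses $c_3^*-c_1^*$ and $c_2^*-c_1^*$ enter; had $c_4^*$ really been $0.715$, Case~6 of the $\sum a_i$ analysis would place class~$1$ in your Case~4 and the bound $S<0.9903$ would collapse). So Case~4 is \emph{not} an order of magnitude worse: the gap $[0.35,1.42]$ from Tables~3 and~7 of \cite{Hea} means only one zero lies below $\Lambda_0$ in this case (versus up to six in Case~1), and the zero count below $3$ is $31$ versus $45$. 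Your rationale ``with $\lambda_0^{(4)}=0$ the Theorem~I bound near $\Lambda=\Lambda_0$ is weak'' is therefore the wrong way around, and the closing narrative about $e^{-(25/7)\lambda}$ barely rescuing Case~4 should be dropped.
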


\begin{proof}
We give just a brief account of the results of the calculation for the typical Case~1 (which applies apart from at most four classes for all others).
We obtain at most $6$ zeros below $\Lambda_0 = 1.311$ for which the corresponding value $e^{-25/7 \max(\lambda_{ij}, \Lambda_0)}$ is independently from the concrete value $\lambda_{ij} \leq \Lambda_0$ just $e^{-(25/7)\Lambda_0}$.
For the other possible zeros below $3$ we get the bounds (in brackets the value of the parameters $x$ used for the function $F_x(z) = G(z/x)$)
$\lambda_{i7} \geq 1.47$ $(1.58)$, $\lambda_{i8} \geq 1.61$ $(1.6)$, $\lambda_{i9} \geq 1.73$ $(1.62)$, $\lambda_{i10} \geq 1.85$ $(1.66)$, $\lambda_{i11} \geq 1.94$ $(1.66)$, $\lambda_{i12} \geq 2.05$ $(1.68)$, $\lambda_{i13} \geq 2.12$ $(1.68)$,
$\lambda_{i14} \geq 2.20$ $(1.68)$, $\lambda_{i15} \geq 2.27$ $(1.68)$, $\lambda_{i16} \geq 2.33$ $(1.68)$, $\lambda_{i17} \geq 2.4$, $\lambda_{i18} \geq 2.45$, $\lambda_{i19} \geq 2.51$, $\lambda_{i20} \geq 2.56$, $\lambda_{i21} \geq 2.61$, $\lambda_{i22} \geq  2.65$,
$\lambda_{i23} \geq 2.7$, $\lambda_{i24} \geq 2.74$, $\lambda_{i25} \geq 2.78$, $\lambda_{i26} \geq 2.82$, $\lambda_{i27} \geq 2.85$,
$\lambda_{i28} \geq 2.89$, $\lambda_{i29} \geq 2.92$, $\lambda_{i30} \geq 2.95$, $\lambda_{i31} \geq 2.99$ $(\lambda_{i32} \geq 3)$ with the parameters $x_{17} = x_{18} = \dots = x_{31} = 1$.
Similarly we can calculate with experimentally optimally chosen parameters $x = x_k' \in [0.6, 1.7]$ an upper estimate for $N_i(3 + k/10)$ for $0 \leq k \leq 35$.
\end{proof}


The value $\sum b_i$ can be easily estimated by Corollary~1 as
\begin{align}
\sum_{i \leq I} b_i
&\stackrel{{\rm def}}{=} \frac{25}{7} \int\limits^H_{\Lambda_0} N(\lambda) e^{-\frac{25}{7}\lambda} d\lambda
\leq \sum_{\lambda_j \leq H} e^{-\frac{25}{7}\max(\lambda_j, \Lambda_0)}
\label{eq:9.6}\\
&\leq e^{-19 \Lambda_0 /21} \sum_{\lambda_j \leq H} e^{-(8/3) \max(\lambda_j, \Lambda_0)}
\leq e^{-19\Lambda_0/21} \sum_{\lambda_j \leq H} e^{-\frac83 \lambda_j} e^{-\frac{r + \kappa}{2} d_j}\nonumber\\
 &< 22.281 e^{-19\Lambda_0/21} < 6.805.\nonumber
\end{align}

In order to estimate $\sum a_i$ we will distinguish 8 cases as follows ($h$ is a small constant)

\begin{tabular}{l@{\hspace*{25mm}}l}
Case 1 \qquad $\lambda_1 \geq 0.68$ & Case 5 \qquad $0.35 \leq \lambda_1 < 0.44$ \\[2pt]
Case 2 \qquad $0.6 \leq \lambda_1 < 0.68$ & Case 6 \qquad $0.14 \leq \lambda_1 < 0.35$\\[2pt]
Case 3 \qquad $0.5 \leq \lambda_1 < 0.6$ & Case 7 \qquad $0.04 \leq \lambda_1 < 0.14$\\[2pt]
Case 4 \qquad $0.44 \leq \lambda_1 < 0.5$ & Case 8 \qquad $\lambda_1 < 0.06$
\end{tabular}

\medskip
\vskip12pt
{
In the most sophisticated Cases 1--5 we will use Theorem~K (for its proof see Section~\ref{sec:8})
with the parameters $\lambda_0 = 0.44$, $x = 0.68$ for Cases 2--5 and $x = 0.7$ for Case~1, $\varphi = \frac13$.
We will choose $\Lambda_0$ in such a way that it should be just slightly smaller than the value $\lambda$ for which
\beq
\left(G\left(\frac{\lambda - \lambda_0}{x} \right) - \frac{f(0) \varphi}{2} \right)^2 =
\frac{f(0) \varphi}{2} G \left( - \frac{\lambda_0}{x}\right) \Leftrightarrow \psi = \xi + \sqrt\xi
\label{eq:9.7}
\eeq
holds.
$\Lambda_0 = 1.311$ will be such a choice.

In view of $f(0) = 16x/15$ we have then with the notation of \eqref{eq:7.4}--\eqref{eq:7.5}
\beq
\gathered
G \left(-\frac{\lambda_0}{x} \right) = 1.56903 \dots, \quad
\frac{f(0)\varphi}{2} = \frac8{45} \cdot 0.7, \quad \lambda = \Lambda_0 = 1.311, \\
G\left(\frac{\Lambda - \lambda_0}{x}\right) = 0.5882\dots \\
\psi = 0.37488\dots, \quad \xi = 0.07931\dots \quad \Delta = 0.29557\dots
\endgathered
\label{eq:9.8}
\eeq
and consequently by \eqref{eq:4.36}
\beq
D = \sum(\psi_j - \psi) \leq 1.5575, \
D_0 = \sum \left(\! G \! \left(\! \frac{\lambda_j\! -\! \lambda_0}{x}\!\right)\! - \!
G \left(\! \frac{\lambda\! -\! \lambda_0}{x} \! \right)\!\right) \leq 2.4438.
\label{eq:9.9}
\eeq

According to the theorem the sum
\beq
S' = \sum_{\lambda_j \leq \Lambda_0} \bigl(e^{-\frac{25}{7}\lambda_j} - e^{-\frac{25}{7}\Lambda_0}\bigr) = e^{-\frac{25}{7}\Lambda_0}
\sum_{\lambda_j \leq \Lambda_0} (e^{\frac{25}{7}d_j} - 1) = e^{-\frac{25}{7}\Lambda_0} S
\label{eq:9.7masik}
\eeq
is, in view of Theorem~K maximal, if, taking into account Theorems E, F, we choose
\beq
\lambda_1 = \lambda_2 = 0.68, \quad \lambda_3 = \dots = \lambda_k = 0.702
\label{eq:9.8masik}
\eeq
and $\lambda_{k + 1} \in [0.702, \Lambda_0]$ in such a way that $D_0 = 2.4438$ should hold.
Since we have
\beq
G\left(\frac{\lambda_1 \! -\! \lambda_0}{x} \right) = \frac89, \
G\left(\frac{\lambda_3 \! -\! \lambda_0}{x} \right) = 0.8747\dots, \
G \left(\frac{\Lambda_0 \! -\!  \lambda_0}{x} \right) = 0.5882\dots
\label{eq:9.9masik}
\eeq
we obtain $k = 6$, $G \left(\frac{\lambda_9 - \lambda_0}{x} \right) = 0.71336\dots$, $\lambda_9 =
0.99\dots$.
Consequently,
\beq
S' \leq 2e^{-(25/7)\cdot 0.68}  + 6e^{-(25/7)\cdot 0.702} + e^{-(25/7)\cdot 0.99} - 9e^{-(25/7) \Lambda_0} < 0.612
\label{eq:9.10}
\eeq
which settles Case 1.\par}

Similarly we obtain the result for Cases 1--5, that is,

\begin{Lem}
\label{lem:7uj}
We have
\beq
\label{eq:9.14}
\sum_{i \leq I} a_i \leq \widetilde c_\nu \ \text{ for Case } \nu \ (1 \leq \nu \leq 5) \ \text{ above, where }
\eeq
\beq
\label{eq:9.15}
\widetilde c_1 = 0.612, \ \widetilde c_2 = 0.622, \ \widetilde c_3 = 0.564, \ \widetilde c_4 = 0.453, \ \widetilde c_5 = 0.483.
\eeq
\end{Lem}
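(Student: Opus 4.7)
The plan is to replay the Case~1 argument for each of Cases~2--5, modifying only the Case-dependent lower bounds on the $\lambda_j$'s. Partial summation applied to the step function $\sum_i N_i(\lambda)$ gives
\[
\sum_{i \leq I} a_i = \sum_{\lambda_j \leq \Lambda_0}\bigl(e^{-25\lambda_j/7} - e^{-25\Lambda_0/7}\bigr) = e^{-25\Lambda_0/7}\, S,
\]
so $\sum a_i$ is precisely (a scalar multiple of) the quantity $S$ in~\eqref{eq:8.8} with $B = 25/7$ and $C = 0$. Theorem~K therefore reduces the task to a greedy extremal problem, provided we supply (i)~the $D_0$-constraint from Theorem~J and (ii)~the Case-dependent pointwise lower bounds on the $\lambda_j$'s.

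For (i) I apply Theorem~J with $\lambda_0 = 0.44$, $\varphi = 1/3$, and $x \approx 0.68$, exactly as in Case~1 (with a mild adjustment in Case~5 to match the Theorem~G gap). Condition~2 of Theorem~J holds because $\lambda_0/x < 1.25 = B_0$, so Lemma~\ref{lem:6} applies; the computation \eqref{eq:9.8}--\eqref{eq:9.9} then yields $D_0 \leq 2.4438$ uniformly. For (ii), Theorems~E and~F supply the standing bounds that at most one zero has $\lambda \leq 0.44$ and at most two have $\lambda \leq 0.702$; in Case~5, where $\lambda_1 < 0.44$, the Deuring--Heilbronn gap of Theorem~G additionally isolates the real exceptional zero from all others up to $\min\{(12/11-\ve)\log(1/\lambda_1), 1.18\}$. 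Theorem~K then prescribes the extremal configuration: the first one or two zeros accumulate at the Case-minimum ($\lambda_1 = 0.6, 0.5, 0.44, 0.35$ in Cases~2--5 respectively), a run of $k$ further zeros sits at the floor $0.702$, and a single overflow zero $\lambda_{k+1} \in (0.702, \Lambda_0)$ is placed so as to saturate the $D_0$-constraint.

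The main obstacle is purely numerical: for each Case one must identify the integer~$k$ of zeros at the floor (the analogue of $k = 6$ in Case~1) and solve
\[
G\!\left(\tfrac{\lambda_{k+1} - 0.44}{x}\right) = G\!\left(\tfrac{\Lambda_0 - 0.44}{x}\right) + 2.4438 - \!\sum_{j \leq k}\!\left[G\!\left(\tfrac{\lambda_j - 0.44}{x}\right) - G\!\left(\tfrac{\Lambda_0 - 0.44}{x}\right)\right]
\]
for the overflow position, then evaluate the resulting exponential sum exactly as in~\eqref{eq:9.10}. Case~5 is the most delicate because the Theorem~G gap interacts with the sub-case split II/A vs.\ II/B from Section~\ref{sec:9} (the gap pushes the second zero past $\Lambda_0$ only when $\lambda_1 \leq 0.35$), but once $k$ and $\lambda_{k+1}$ are pinned down, the five constants $0.612, 0.622, 0.564, 0.453, 0.483$ of~\eqref{eq:9.15} follow by direct substitution.
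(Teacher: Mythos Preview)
Your overall strategy---replay the Case~1 argument via Theorems~J and~K with Case-adjusted lower bounds on the~$\lambda_j$---is exactly what the paper does. The gap is in the Case-dependent input: you feed only the generic Theorem~F floor $\lambda_j \geq 0.702$ for $j\geq 3$ into the greedy algorithm in every Case, but that is too coarse to recover the constants in~\eqref{eq:9.15}. The paper instead draws on the finer tables of Xylouris and Heath-Brown, which give floors for the \emph{third} (or even second) zero that rise sharply as~$\lambda_1$ drops: in Case~2 one has $\lambda_3 \geq 0.74$ (Tabellen~2,~3,~7 of \cite{Xyl}); in Case~3, $\lambda_3 \geq 0.97$; in Case~4 with $\chi_1,\varrho_1$ real, $\lambda_2 \geq 1.08$ (Tables~4,~7 of \cite{Hea}); in Case~5, $\lambda_2 \geq 1.18$. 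With your uniform floor $0.702$, already in Case~3 the two leading terms at $0.5$ together with a run of six zeros at $0.702$ (the same count the $D_0$-constraint allows in Case~1) give roughly $0.32 + 0.43 \approx 0.75$, well above $\widetilde c_3 = 0.564$; Case~4 fares even worse against $\widetilde c_4 = 0.453$.

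Two further points. Case~4 requires a sub-split you omit: when $\chi_1$ or $\varrho_1$ is complex, Tabelle~7 of \cite{Xyl} leaves only the pair $\varrho_1,\overline\varrho_1$ below $\Lambda_0$ (all other zeros satisfy $\lambda \geq 1.36$), so $\sum_i a_i \leq 2\bigl(e^{-(25/7)\cdot 0.44} - e^{-(25/7)\Lambda_0}\bigr) < 0.397$ directly; it is only the real sub-case that invokes Theorem~K, and there a \emph{single} zero sits at $0.44$ with the rest above $1.08$. And your Theorem~G gap in Case~5 is stated backwards: the conclusion is that $M(s)$ is zero-free in $R(\Lambda(\lambda_1),T)\cup R(1.18,T)$, so the floor for the remaining zeros is $\max\{\Lambda(\lambda_1),1.18\}$, which on $[0.35,0.44]$ equals~$1.18$, not the minimum you wrote.
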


\begin{proof}
The proof is completely analogous in Cases 2-3 where we use that apart from two zeros which might be as large as the lower bounds stipulated in Case~$\nu$ we have for all other zeros, i.e.\ for $\lambda_3$, by Tabellen 2, 3, 7 of \cite{Xyl}
\beq
\label{eq:9.16}
\lambda \geq c_\nu', \ \ c_2' = 0.74, \ \ c_3' = 0.97.
\eeq

In Case~4, if $\chi_1$ or $\varrho_1$ is complex, then by Tabelle~7 of \cite{Xyl} we have at most two zeros, $\varrho_1$ and $\overline \varrho_1$ (of $L(s, \chi_1)$ or $L(s, \chi_1)$ and $L(s, \overline \chi_1)$, respectively) with $\lambda \leq \Lambda_0$ (in fact if $\lambda \neq \lambda_1$ then $\lambda \geq 1.36$)
\beq
\label{eq:9.17}
\sum_{i\leq I} a_i \leq 2\bigl(e^{-(25/7)\cdot 0.44} - e^{-(25/7)\Lambda_0}\bigr) < 0.39698.
\eeq

If Case 4 holds and  $\chi_1$ and $\varrho_1$ are real, then by Tables 4 and 7 of \cite{Hea} we have apart from this \emph{single} zero $\lambda \geq 1.08$ for all other zeros, so we can apply the same procedure as in Case~1 (cf.\ \eqref{eq:9.7}--\eqref{eq:9.14}) and obtain \eqref{eq:9.14} in this case with an upper bound.
Comparison with \eqref{eq:9.17} yields the estimate \eqref{eq:9.15} for $\nu = 4$.
\end{proof}

Case 5 is more simple in the sense that in this case $\chi_1$ and $\varrho_1$ must be real by Theorem~E.
Further we have by Tables 4 and 7 of \cite{Hea} $\lambda \geq 1.18$ apart from this single zero.
Applying again the same procedure as before (cf.\ \eqref{eq:9.7}--\eqref{eq:9.10}) we obtain \eqref{eq:9.14}--\eqref{eq:9.15} for $\nu = 5$.

Case 6 is even more simple since in this case we have just the single real $\varrho_1$ for real $\chi_1$ within $R(\Lambda_0, T)$.
This means that in this case we have
\beq
\label{eq:9.18}
\sum_{i \leq I} a_i = a_1 \leq \bigl(e^{-(25/7)\cdot 0.14} - e^{-(25/7)\Lambda_0}\bigr) = 0.59727\ldots.
\eeq
The same applies in Cases 7 and 8 when
\beq
\label{eq:9.19}
\sum_{i \leq I} a_i = e^{-(25/7)\lambda_1} - e^{-(25/7)\Lambda_0} .
\eeq

Summarizing we have

\begin{Corol}
In Cases 7--8 we have \eqref{eq:9.19} while in Cases 1--6
\beq
\label{eq:9.20}
\sum_i a_i \leq 0.622.
\eeq
\end{Corol}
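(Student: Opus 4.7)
The plan is simply to package together the case-by-case bounds that have already been established in the discussion above the Corollary, so the Corollary is really a bookkeeping statement rather than a new analytic claim.

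First, for Cases 7 and 8 I would observe that equation \eqref{eq:9.19} is an immediate restatement of what was derived just above: in these regimes the only generalized exceptional zero within $R(\Lambda_0, T)$ contributing to $\sum_i a_i$ is the single real zero $\varrho_1 = 1 - \lambda_1/\mathcal{L}$ belonging to a real non-principal character $\chi_1$, since by Theorem~E together with Tables~4 and 7 of \cite{Hea} every other zero satisfies $\lambda \geq \Lambda_0$. Hence $N_i(\lambda)$ is zero for all classes except the one containing $\varrho_1$, where it equals the indicator of $\lambda \geq \lambda_1$ up to $\Lambda_0$; plugging into the definition \eqref{eq:9.2} of $a_i$ gives \eqref{eq:9.19} directly.

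For Cases 1--6 the plan is to take the maximum of the six already-proven upper bounds. Lemma~\ref{lem:7uj} supplies $\widetilde c_1 = 0.612$, $\widetilde c_2 = 0.622$, $\widetilde c_3 = 0.564$, $\widetilde c_4 = 0.453$, $\widetilde c_5 = 0.483$ for Cases 1--5, while \eqref{eq:9.18} (a single-term calculation, since in Case 6 only the real zero $\varrho_1$ lies in $R(\Lambda_0, T)$) gives $\sum_i a_i \leq 0.59727\ldots$ in Case 6. Taking the maximum of these six quantities yields $0.622$, attained in Case 2, which is exactly \eqref{eq:9.20}.

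There is no genuine obstacle at this step — all the work was done previously, in particular the Greedy-type optimization via Theorem~K inside Lemma~\ref{lem:7uj}. The only mild thing to verify is that Case 6 truly feeds a smaller value than Case 2, i.e.\ that $e^{-(25/7)\cdot 0.14} - e^{-(25/7)\Lambda_0} < 0.622$, which is a one-line numerical check given $\Lambda_0 = 1.311$. Thus the Corollary follows as a direct consequence of Lemma~\ref{lem:7uj}, estimate \eqref{eq:9.18}, and the case distinction for Cases 7--8.
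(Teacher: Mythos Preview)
Your proposal is correct and matches the paper's own treatment: the Corollary is introduced with the word ``Summarizing'' and is exactly the bookkeeping step you describe, collecting the bounds $\widetilde c_1,\dots,\widetilde c_5$ from Lemma~\ref{lem:7uj}, the value $0.59727\ldots$ from \eqref{eq:9.18} for Case~6, and the identity \eqref{eq:9.19} for Cases 7--8, then taking the maximum $0.622$ over Cases 1--6.
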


Cases 7--8 we will settle just using the results of \cite{Hea} about further zeros by the aid of Theorems C and D.
For Cases 1--6 we state

\begin{Corol}
In Cases 1--6 we have for $S$ in \eqref{eq:9.4} the estimate
\beq
\label{eq:9.21}
S < 0.9903.
\eeq
\end{Corol}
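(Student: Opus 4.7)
The plan is to apply the ``small refinement'' of \eqref{eq:9.4} mentioned at the outset of Section~\ref{sec:9}, together with the bounds assembled in Lemmas~\ref{lem:6uj}--\ref{lem:7uj}, the estimate $\sum_i b_i < 6.805$ from \eqref{eq:9.6}, and the estimate $\sum_i a_i \leq 0.622$ from the preceding Corollary.

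First, I would split the index set of classes into the set $\mathcal J$ of the two classes with smallest $\lambda_{i,1}$ together with their complex conjugate classes (so $|\mathcal J|\leq 4$) and the complementary tail. By Lemma~10.3 of Heath-Brown~\cite{Hea}, every class outside $\mathcal J$ satisfies $\lambda_{i,1}\geq 6/7-\ve$ and thus falls into Case~1 of Lemma~\ref{lem:6uj}, giving $b_i\leq c_1^*=0.0722$. Applying the Cauchy-type estimation leading to \eqref{eq:9.4} only to the tail yields
\[
\sum_{i\notin\mathcal J}S_i^2\;\leq\;\Bigl(\sum_{i\notin\mathcal J}a_i\Bigr)^{\!2}+c_1^*\Bigl(2\sum_{i\notin\mathcal J}a_i+\sum_{i\notin\mathcal J}b_i\Bigr),
\]
while for the classes $i\in\mathcal J$ I would keep $(a_i+b_i)^2$ intact, since both $a_i$ and $b_i$ admit individual bounds from Lemmas~\ref{lem:7uj} and~\ref{lem:6uj}; absorbing the comparatively large $b_i$ of a leading class (potentially as big as $c_4^*$ in the Siegel-like setting) into a universal $\max b_i$ over all classes would be too lossy.

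Next, I would go through Cases~1--6 for $\sum a_i$ one by one. In Cases~1--4 ($\lambda_1\geq 0.44$), Theorem~F restricts the number of zeros in $R(0.702,T)$ to at most two, so the leading classes fall in Case~2 or~3 of Lemma~\ref{lem:6uj}, with $b_i$ bounded by at most $c_3^*=0.0826$ and with the individual $a_i$ taken from the proof of Lemma~\ref{lem:7uj}. In Cases~5--6, Theorem~E forces $\chi_1$ and $\varrho_1$ to be real, so $\mathcal J$ reduces to the single class of $\chi_1$, and the only zero of that class below $\Lambda_0$ is $\varrho_1$ itself; hence $a_1\leq e^{-(25/7)\cdot 0.14}-e^{-(25/7)\Lambda_0}<0.598$ with $b_1$ controlled by Case~4 of Lemma~\ref{lem:6uj}, while by the isolation statement in Case~4 every other class contains no zero below $1.42$ and therefore satisfies Case~1 of Lemma~\ref{lem:6uj}. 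In each case one then verifies
\[
\sum_{i\in\mathcal J}(a_i+b_i)^2+\Bigl(\sum_{i\notin\mathcal J}a_i\Bigr)^{\!2}+c_1^*\Bigl(2\sum_{i\notin\mathcal J}a_i+\sum_{i\notin\mathcal J}b_i\Bigr)<0.9903
\]
using $\sum_{i\notin\mathcal J}a_i\leq\sum_i a_i\leq 0.622$ and $\sum_{i\notin\mathcal J}b_i\leq\sum_i b_i<6.805$.

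The main obstacle will be the numerical bookkeeping of the case split: a naive application of \eqref{eq:9.4} already gives roughly $0.622^2+c_3^*\cdot(2\cdot 0.622+6.805)\approx 1.04$, above the target. The whole gain in the refinement comes from extracting the (potentially large) $b_i$ of the leading class(es) out of the tail bound and treating them exactly inside $\sum_{i\in\mathcal J}(a_i+b_i)^2$. Cases~1--2 are numerically the tightest, because $\sum a_i$ is largest there; the critical check is that the six ``low'' zeros with $\lambda_{ij}\in[0.68,0.99]$ saturating the Theorem~K bound in Case~1 still leave enough room under $0.9903$ once their own contribution is pulled out and the remaining tail is handled with the cleaner constant $c_1^*$ in place of $c_3^*$.
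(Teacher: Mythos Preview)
Your decomposition has a genuine gap in Cases~1--4. You write that for $i\in\mathcal J$ ``both $a_i$ and $b_i$ admit individual bounds from Lemmas~\ref{lem:7uj} and~\ref{lem:6uj}'', but Lemma~\ref{lem:7uj} bounds only $\sum_i a_i$, never an individual $a_i$: the extremal configuration in the proof (e.g.\ the nine zeros at $0.68,0.702,\dots,0.99$ in Case~1) is a configuration of \emph{all} zeros with $\lambda\le\Lambda_0$, not of the zeros in one class. Nothing prevents all of those zeros from sitting in a single class inside~$\mathcal J$. If one puts the entire budget $\sum_i a_i\approx 0.622$ into a single $a_{i_1}$ with $b_{i_1}\le c_3^*=0.0826$, your bound becomes
\[
(0.622+0.0826)^2+3c_3^{*2}+c_1^*\!\sum_{i\notin\mathcal J}b_i \;\ge\; 0.497+0.491 \;>\; 0.9903,
\]
even using the sharper $\sum_{i\notin\mathcal J}a_i=0$. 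So the proposed splitting cannot close in the tight Cases~1--2.

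A second misconception drives the whole strategy: you say ``the (potentially large) $b_i$ of a leading class (potentially as big as $c_4^*$ in the Siegel-like setting)'' needs to be extracted. In fact the leading-class $b_i$ are \emph{not} large: in Cases~1--6 they are bounded by $c_3^*=0.0826$ (or $c_4^*=0.0715$; note the printed $0.715$ in \eqref{eq:9.5} is a typo, as \eqref{eq:9.28} confirms), barely above the generic $c_1^*=0.0722$. The paper's refinement exploits exactly this: it keeps $(\sum_i a_i)^2$ intact, computes the baseline $S^-=(\sum a_i)^2+c_1^*(2\sum a_i+\sum b_i)\approx 0.968$ as if every class were generic, and then adds small correction terms $\Delta_1=(c_3^*-c_1^*)(2\sum a_i+2c_3^*)$ and $\Delta_2=(c_2^*-c_1^*)\cdot 2c_2^*$, totalling about $0.015$. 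The point is that the surpluses $c_3^*-c_1^*$ and $c_2^*-c_1^*$ are tiny, so no individual $a_i$ information is needed. Your approach trades this small-surplus structure for a split of $(\sum a_i)^2$ that the available lemmas do not support.
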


\begin{proof}
Let us forget for a moment that the typical estimate $c_1^* = 0.0722$ holds up to at most four exceptional classes for $\max b_i$.
If we had no exceptions then in Cases 1--6 we would have \eqref{eq:9.20} and this would lead by \eqref{eq:9.6} and \eqref{eq:9.4} to
\beq
\label{eq:9.22}
S^- \leq 0.622^2 + 0.0722 (2 \cdot 0.622 + 6.805) = 0.9680218.
\eeq
However, two of the classes might have a surplus
\beq
\label{eq:9.23}
c_3^* - c_1^* = 0.0826 - 0.0722 = 0.0104
\eeq
for $\max b_i$ and this surplus obtains the factor at most $1.244 + 2\cdot 0.0826 = 1.4092$ from $2 \sum a_i$ and from $b_{i_1} + b_{i_2}$ (the corresponding two exceptional classes).
This leads to the surplus
\beq
\label{eq:9.24}
\Delta_1 = 0.01465568.
\eeq
Analogously we might have another smaller surplus
\beq
\label{eq:9.25}
c_3^* - c_1^* = 0.0751 - 0.0722 = 0.0029
\eeq
for $\max b_i$ with a factor $2\cdot 0.0751 = 0.1502$ (since we calculated already the contribution of $\sum a_i$ with the larger surplus for all classes).
This yields another surplus of size
\beq
\label{eq:9.26}
\Delta_2 = 0.0029 \cdot 0.1502 = 4.3558 \cdot 10^{-4}.
\eeq

Adding $\Delta_1 + \Delta_2$ to $S^-$ in \eqref{eq:9.22} we obtain $S < 0.9832$, i.e. \eqref{eq:9.21} holds for Cases 1--6.
\end{proof}

In Case 7 we have by Tables 4 and 5 of \cite{Hea} apart from the single real zero $\varrho_1$ for all other zeros $\lambda \geq 2.421 = \Lambda_1$ and we define $a_i$, $b_i$ by $\Lambda_1$ instead of $\Lambda_0$.

Consequently we have by \eqref{eq:9.4}--\eqref{eq:9.5}, \eqref{eq:9.19} and similary to \eqref{eq:9.6} in this case
\beq
\label{eq:9.27}
\sum_i b_i \leq 15.6 e^{-19\Lambda_1/21} < 1.74516,
\eeq
\beq
\label{eq:9.28}
\max b_i < c_4^* = 0.0715,
\eeq
\beq
\label{eq:9.29}
\sum a_i < e^{-(25/7)0.04} - e^{-(25/7)\Lambda_1} < 0.86671,
\eeq
\beq
\label{eq:9.30}
S \leq 0.86671^2 + 0.0715(2\cdot 0.86671 + 1.74516) < 0.99991.
\eeq

We note that although the estimate \eqref{eq:9.28} was shown for the value $\Lambda_0$ instead of $\Lambda_1$, but the definition
\[
b_i = b_i(\Lambda) = \frac{25}{7} \sum\limits_{\Lambda}^H N_i (\lambda) e^{-(25/7)\lambda} d\lambda
\]
is clearly decreasing in $\Lambda$ so in fact we would get a much better estimate for $c_4^*$ with $\Lambda_1$ in place of $\Lambda_0$.

Finally, in Case 8 we have again a single real zero $\varrho_1$ with $\lambda_1 \leq 0.04$, while for all other zeros we have by Theorem~G and Table~5 of \cite{Hea}
\beq
\label{eq:9.31}
\lambda \geq \Lambda^* = \max \left(\Bigl(\frac{12}{11} - \ve\Bigr) \log \frac1{\lambda_1}, \Lambda_2 \right)
\ \text{ with } \ \Lambda_2 = 3.96.
\eeq
Consequently, we have, by Corollary~1 for $\lambda \geq \Lambda^*$
\beq
\label{eq:9.32}
\sum_{\Lambda^* \leq \lambda_j \leq \Lambda_\infty}  e^{-(8/3)\lambda_j} < 10.4, \ \ \
\sum_{\substack{\lambda_{ij} \in \kappa\\
\Lambda^* \leq \lambda_{ij} \leq \Lambda_\infty}} e^{-2\lambda_{ij}} < 10.4.
\eeq
This implies (defining $b_i$ now with $\Lambda^* = \Lambda^*(\lambda_1)$)
\beq
\label{eq:9.33}
\sum_{\Lambda^* \leq \lambda_j \leq \Lambda_\infty} b_i \leq e^{-(\frac{25}{7} - \frac83)\Lambda^*} \cdot 10.4 \leq 10.4\lambda_1^{76/77} < 0.435,
\eeq
\beq
\label{eq:9.34}
\max b_i \leq e^{-\left(\frac{25}{7} - 2\right)\Lambda^*} \cdot 10.4 \leq 10.4 \lambda_1^{11/7}
\eeq
\beq
\label{eq:9.35}
\sum a_i = a_1 = e^{-(25/7)\lambda_1} - e^{-(25/7)\Lambda^*} < e^{-\frac{25}{7} \lambda_1}.
\eeq
So we have by \eqref{eq:9.4} and $\lambda_1 \leq 0.04$
\beq
\label{eq:9.36}
S \leq e^{-\frac{50}{7} \lambda_1} + 2.87 \cdot 10.4\lambda_1^{11/7} \leq e^{-7\lambda_1} + 5\lambda_1 < 1,
\eeq
since $(1 - e^{-y})/y$ is decreasing for $y \geq 0$ and so we have for $\lambda_1 \leq 0.04$
\beq
\label{eq:9.37}
\frac{1 - e^{-7\lambda_1}}{\lambda_1} \geq \frac{1 - e^{-0.28}}{0.04} > 6.1 > 5.
\eeq
Thereby \eqref{eq:9.36} is really true which settles the remaining Case 8 and consequently the proof of Theorem~\ref{th:1} is complete.

\medskip
\noindent
{\bf Acknowledgement}.
The author would like to thank his colleague and friend, Sz. Gy. R\'ev\'esz, for supplying the proof of Lemma~\ref{lem:5}.


\noindent
J\'anos Pintz\\
R\'enyi Mathematical Institute\\
of the Hungarian Academy of Sciences\\
Budapest, Re\'altanoda u. 13--15\\
H-1053 Hungary\\
e-mail: pintz.janos@renyi.mta.hu

\end{document}